\newcommand{\A}{\mathbb{A}}
\newcommand{\NN}{\mathbb{N}}
\newcommand{\PP}{\mathbb{P}}
\newcommand{\QQ}{\mathbb{Q}}
\newcommand{\ZZ}{\mathbb{Z}}
\newcommand{\cA}{\mathcal{A}}
\newcommand{\cB}{\mathcal{B}}
\newcommand{\ccD}{\mathcal{D}}
\newcommand{\cF}{\mathcal{F}}
\newcommand{\cG}{\mathcal{G}}
\newcommand{\cN}{\mathcal{N}}
\newcommand{\cS}{\mathcal{S}}
\newcommand{\cT}{\mathcal{T}}
\newcommand{\cU}{\mathcal{U}}
\newcommand{\cV}{\mathcal{V}}
\newcommand{\cZ}{\mathcal{Z}}
\newcommand{\fb}{\mathfrak{b}}
\newcommand{\fsl}{\mathfrak{sl}}
\newcommand{\dact}{\boldsymbol{.}}
\newcommand{\lra}{\longrightarrow}
\DeclareMathOperator{\Char}{char}
\DeclareMathOperator{\cx}{cx}
\DeclareMathOperator{\End}{End}
\DeclareMathOperator{\Ext}{Ext}
\DeclareMathOperator{\GL}{GL}
\DeclareMathOperator{\Hom}{Hom}
\DeclareMathOperator{\im}{im}
\DeclareMathOperator{\id}{id}
\DeclareMathOperator{\ind}{ind}
\DeclareMathOperator{\Mat}{Mat}
\DeclareMathOperator{\modd}{mod}
\DeclareMathOperator{\res}{res}
\DeclareMathOperator{\rk}{rk}
\DeclareMathOperator{\SL}{SL}
\numberwithin{equation}{section}
\newtheorem{Theorem}{Theorem}[section]
\newtheorem{Lemma}[Theorem]{Lemma}
\newtheorem{Corollary}[Theorem]{Corollary}
\newtheorem{Proposition}[Theorem]{Proposition}
\theoremstyle{Theorem}
\newtheorem{Thm}{Theorem}[subsection]
\newtheorem{Lem}[Thm]{Lemma}
\newtheorem{Prop}[Thm]{Proposition}
\newtheorem{Cor}[Thm]{Corollary}
\newtheorem*{thm*}{Theorem}
\theoremstyle{remark}
\newtheorem*{Remark}{Remark}
\newtheorem*{Remarks}{Remarks}
\newtheorem*{Definition}{Definition}
\newtheorem*{Examples}{Examples}
\numberwithin{equation}{section}
\begin{document}

\title[Finite Group Schemes of Domestic Type]{Extensions of Tame Algebras and Finite Group Schemes of Domestic Representation Type}

\author{Rolf Farnsteiner}

\address
{Mathematisches Seminar, Christian-Albrechts-Universit\"at zu Kiel,
Ludewig-Meyn-Str.\ 4, 24098 Kiel, Germany.}
\email{rolf@math.uni-kiel.de}
\thanks{Supported by the D.F.G. priority program SPP1388 `Darstellungstheorie'.}
\date{\today}

\makeatletter
\makeatother

\subjclass[2010]{Primary 16G60, 14L15}

\begin{abstract} Let $k$ be an algebraically closed field. Given an extension $A\!:\!B$ of finite-dimensional $k$-algebras, we establish criteria ensuring that the representation-theoretic 
notion of polynomial growth is preserved under ascent and descent. These results are then used to show that principal blocks of finite group schemes of odd characteristic are of polynomial 
growth if and only if they are Morita equivalent to trivial extensions of radical square zero tame hereditary algebras. In that case, the blocks are of domestic representation type and the underlying 
group schemes are closely related to binary polyhedral group schemes. \end{abstract}

\maketitle

\section*{Introduction}
Let $k$ be an algebraically closed field. Given a finite-dimensional $k$-algebra $A$, we denote by $\modd_A$ the category of finitely generated left $A$-modules. We will write $[M]$ for the isoclass of $M \in \modd_A$. If $\modd_A$ affords only finitely many isoclasses of indecomposable objects, then $A$ is called {\it representation-finite}. A fundamental result of Drozd \cite{Dr} asserts that $A$ is
 either tame or wild in the sense of the following definition:

\bigskip

\begin{Definition} An algebra $A$ is referred to as {\it tame}, if for every $d >0$, there exist $(A,k[T])$-bimodules $X_1, \ldots, X_{n(d)}$ which are free $k[T]$-modules of rank $d$,  
such that all but finitely many isoclasses of indecomposable $A$-modules of dimension $d$ are of the form $[X_i\!\otimes_{k[T]}\!k_\lambda]$ for  $i \in \{1,\ldots, n(d)\}$ and 
$\lambda : k[T] \lra k$. 

We say that $A$ is {\it wild}, if there exists an $(A,k\langle x,y\rangle)$-bimodule $X$, which is a finitely generated free right $k\langle x,y\rangle$-module, such that the functor
\[ \modd k\langle x,y \rangle \lra \modd A \ \ ; \ \ M \mapsto X\!\otimes_{k\langle x,y\rangle}\!M\]
preserves indecomposables and reflects isomorphisms. \end{Definition}

\bigskip
\noindent
The notion of wildness derives from the fact that the module category a wild algebra $A$ is at least as complicated as that of any other algebra, rendering the classification of indecomposable $A$-modules a rather hopeless endeavor. 

\bigskip

\begin{Remark} In the definition of tameness, it suffices to require the parametrizing $(A,k[T])$-bimodules $(X_i)_{1\le i \le n(d)}$ of the $d$-dimensional indecomposable modules to be finitely
generated over $k[T]$. In that case, each torsion submodule $t(X_i)$ of the $k[T]$-module $X_i$ is an $(A,k[T])$-submodule, and there exists a non-zero polynomial $f_i \in k[T]$ such that $t(X_i)f_i = (0)$. Let $f:=f_1f_2\cdots f_{n(d)}$. Then the principal open subset $D(f) := \{ \lambda \in \Hom_{\rm alg}(k[T],k) \ ; \ \lambda(f) \ne 0\}$ is cofinite.

Let $M$ be a $d$-dimensional indecomposable $A$-module. Assume that, with only finitely many exceptions, we have $[M] = [X_i\!\otimes_{k[T]}\!k_\lambda]$ for some $i \in \{1,\ldots,n(d)\}$ and $\lambda: k[T] \lra k$. If $\lambda \in D(f)$, then $t(X_i)\!\otimes_{k[T]}\!k_\lambda = (0)$, and the right exactness of the tensor product implies that $[M] = [Y_i\!\otimes_{k[T]}\!k_\lambda]$, where $Y_i := X_i/t(X_i)$ is an $(A,k[T])$-bimodule that is a free $k[T]$-module of rank $d$. 

This implies in particular that tameness is preserved under Morita equivalence. In \cite{KZ} the authors show that this even holds for stable equivalence. \end{Remark}

\bigskip
\noindent
Let $A$ be a tame algebra. For each $d>0$, we let $\mu_A(d)$ be the minimum of all possible numbers $n(d)$ occurring in the definition. We thus obtain a function $\mu_A : \NN \lra \NN_0$, which we refer to as the {\it growth function} of the tame algebra $A$. The bimodules $X_1,\ldots, X_{n(d)}$ define morphisms
\[ f_{i,d} : \A^1 \lra \modd^d_A \ \ ; \ \ \lambda \mapsto X_i\!\otimes_{k[T]}\!k_\lambda,\]
with values in the variety $\modd^d_A$ of $d$-dimensional $A$-modules. Accordingly, these bimodules provide a continuous one-parameter families of indecomposable $A$-modules of dimension $d$. This fact notwithstanding, there exist prominent examples of tame algebras, where a classification of the indecomposables has remained elusive. This has led to the definition of various subclasses of tame algebras, whose module categories promise to be better behaved. In this paper, we are ultimately concerned with the following class, introduced by Ringel \cite{Ri}:  

\bigskip

\begin{Definition} An algebra $A$ is called {\it domestic}, if there exist $(A,k[T])$-bimodules $X_1,\ldots,X_m$ that are free of finite rank over $k[T]$ such that for every $d>0$, all but 
finitely many isoclasses of $d$-dimensional indecomposable $A$-modules are of the form $[X_i\!\otimes_{k[T]}\!(k[T]/(T\!-\!\lambda)^j)]$ for some $i \in \{1,\ldots, m\}, j \in \NN$ and $\lambda \in k$. \end{Definition}

\bigskip

\begin{Remarks} (1) \ A group algebra $kG$ of a finite group $G$ is representation-infinite and domestic if and only if $\Char(k)=2$, and the Sylow $2$-subgroups of $G$ are Klein four groups, cf.\ \cite[(4.4.4)]{Be}. The restricted enveloping algebra $U_0(\fsl(2))$ of the restricted Lie algebra $\fsl(2)$ over a field of characteristic $p>0$ is domestic. 

(2) By the above remark, domesticity is preserved under Morita equivalence. \end{Remarks}

\bigskip
\noindent
Finite groups and restricted Lie algebras are examples of finite group schemes. The purpose of this paper is determine those finite $k$-group schemes $\cG$ of characteristic $p\ge 3$, whose algebra $k\cG$ of measures has domestic representation type. To that end, we study particular algebra extensions $A\!:\!B$, and show that the more general notion of polynomial growth is preserved under ascent and descent.

Given a function $f : \NN \lra \NN_0$, we say that $f$ has {\it polynomial growth}, if there exist $c>0$ and $n\ge 0$ such that $f(d) \le cd^{n-1}$ for all $d\ge 1$. The minimal number $n\in \NN_0$ with this property is the {\it rate of growth} $\gamma_f$ of the function $f$. The following class of algebras was introduced by Skowro\'nski \cite{Sk1}:

\bigskip

\begin{Definition} A tame algebra $A$ is said to be of {\it polynomial growth}, if $\mu_A$ has polynomial growth. In that case, we refer to $\gamma_A := \gamma_{\mu_A}$ as the  {\it rate of growth} of the tame algebra $A$. \end{Definition}

\bigskip

\begin{Remarks} Let $A$ be tame. 
\begin{enumerate}
\item[(a)] In view of Brauer-Thrall II (cf.\ \cite[(IV.5)]{ASS}), the algebra $A$ is representation-finite if and only if $A$ is tame and $\gamma_A = 0$.
\item[(b)] Thanks to \cite[(5.7)]{CB}, the algebra $A$ is domestic if and only if $A$ is tame and $\gamma_A\le1$.
\item[(c)] Suppose that $A$ is of polynomial growth. If $B$ is a $k$-algebra that is Morita equivalent to $A$, then the above remark implies that $B$ is also of
polynomial growth. 
\item[(d)] If $A$ is of polynomial growth and $I\unlhd A$ is an ideal, then parametrizing $(A,k[T])$-bimodules $X_1,\ldots, X_{\mu_A(d)}$ induce parametrizing $(A/I,k[T])$-bimodules 
$(X_j/IX_j)_{1\le j \le \mu_A(d)}$, and the above remark implies that $A/I$ is of polynomial growth of rate $\gamma_{A/I}\le \gamma_A$.
\item[(e)] As will be shown in Section 4, group schemes of polynomial growth are of domestic representation type. \end{enumerate}  \end{Remarks} 

\bigskip
\noindent
Our main result provides a criterion guaranteeing that polynomial growth is preserved under passage between the constituents of certain extensions $A\!:\!B$. We say that an extension $A\!:\!B$ of $k$-algebras is {\it split}, if $B$ is a direct summand of the $(B,B)$-bimodule $A$. We refer to $A\!:\!B$ as {\it separable}, if the multiplication of $A$ induces a split surjective morphism $\mu : A\!\otimes_B\!A \lra A$ of $(A,A)$-bimodules. 

\bigskip

\begin{thm*} Suppose that $A\!:\!B$ is an extension of algebras.
\begin{enumerate} 
\item If $A\!:\!B$ is split and $A$ is of polynomial growth, then $B$ is of polynomial growth, and $\gamma_B \le \gamma_A\!+\!1$.
\item If $A\!:\!B$ is separable and $B$ is of polynomial growth, then $A$ is of polynomial growth and $\gamma_A \le \gamma_B\!+\!1$.
\end{enumerate}\end{thm*}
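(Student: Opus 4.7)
The plan is to treat the two parts in parallel, since they are formally dual. In each case the argument proceeds in three stages: a splitting lemma showing that every indecomposable of the target algebra is a direct summand of a module transferred from the source; a generic indecomposable decomposition of one-parameter families, converting the source's parametrizing bimodules into candidates for the target's; and a counting argument bounding $\mu$ of the target.

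For (1), fix a $(B,B)$-bimodule decomposition $A = B \oplus C$. For $N \in \modd_B$, the natural map $N \lra \res_B(A\!\otimes_B\!N)$, $n \mapsto 1\!\otimes\!n$, is split by projection along $C\!\otimes_B\!N$, so $N$ is a direct $B$-summand of $\res_B(A\!\otimes_B\!N)$. Since $\dim_k(A\!\otimes_B\!N) \le (\dim_k A)\cdot\dim_k N$, every indecomposable $B$-module of dimension $d$ appears as a $B$-summand of $\res_B M$ for some indecomposable $A$-module $M$ with $\dim_k M \le c_1 d$, where $c_1 := \dim_k A$. For (2), the $(A,A)$-splitting $\sigma\colon A \lra A\!\otimes_B\!A$ of the multiplication produces, for each $M \in \modd_A$, an $A$-linear splitting of the canonical surjection $A\!\otimes_B\!\res_B M \tha M$; hence every indecomposable $A$-module of dimension $d$ is an $A$-summand of $A\!\otimes_B\!N$ for some indecomposable $B$-summand $N$ of $\res_B M$, with $\dim_k N \le d$.

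To transport parametrizing families I need the following generic decomposition statement. Let $C$ be a finite-dimensional algebra and let $Z$ be a $(C, k[T])$-bimodule, free of rank $e$ over $k[T]$. Then there exists $g \in k[T]$ with $D(g)$ cofinite and a decomposition $Z\!\otimes_{k[T]}\!k[T, g^{-1}] = \bigoplus_{j=1}^{s} Z_j$ in $(C, k[T,g^{-1}])$-bimodules, each $Z_j$ free of rank $d_j$ over $k[T, g^{-1}]$, with every fiber $Z_j\!\otimes_{k[T]}\!k_\lambda$ indecomposable as a $C$-module of dimension $d_j$ for $\lambda \in D(g)$. One proves this by choosing a generic closed point $\lambda_0$ and lifting a primitive orthogonal idempotent decomposition of $\End_C(Z\!\otimes_{k[T]}\!k_{\lambda_0})$ to the finite $k[T]$-algebra $\End_C(Z)$ via Hensel-type idempotent lifting in its semilocal completion at $\lambda_0$, then spreading out to the cofinite open on which the idempotents remain primitive and the ranks constant. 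For each target dimension $d'$, at most $e/d'$ of the $Z_j$ have that dimension. Replacing each $Z_j$ by a $(C, k[T])$-bimodule whose $k[T]$-torsion-free quotient has the same generic fibers (as in the Remark following the definition of tameness) recovers $k[T]$-freeness. Applying this in (1) to the bimodules $X_i^{(e)}$ from the polynomial growth data of $A$, regarded as $(B, k[T])$-bimodules, and in (2) to the bimodules $A\!\otimes_B\!Y_i^{(e)}$ obtained from polynomial growth data of $B$, produces the parametrizing bimodules for the target algebra.

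For (1), the indecomposable $B$-modules of dimension $d$ coming from families are parametrized by at most $\sum_{e=1}^{c_1 d}(e/d)\,\mu_A(e)$ bimodules, and exceptional $A$-indecomposables at each $e$ add at most $O(\mu_A(e))$ further ones; with $\mu_A(e) \le c\, e^{\gamma_A - 1}$ this sums to $\mu_B(d) = O(d^{\gamma_A})$, giving $\gamma_B \le \gamma_A + 1$. Part (2) is analogous, with the sum taken up to $d$ and $\mu_B$ in place of $\mu_A$, yielding $\gamma_A \le \gamma_B + 1$. The principal obstacle, I expect, is a rigorous bimodule version of the generic decomposition -- especially the idempotent lifting and spread-out -- and, for part (2), checking that $A\!\otimes_B\!Y$ becomes $k[T]$-free after torsion removal while still capturing all but finitely many relevant indecomposables. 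Careful bookkeeping of the exceptional modules at each dimension is also required, but does not affect the asymptotic growth.
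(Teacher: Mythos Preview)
Your strategy diverges from the paper's and has a genuine gap at the step you yourself flag: the generic bimodule decomposition. Lifting idempotents from $\End_C(Z\!\otimes_{k[T]}\!k_{\lambda_0})$ to the $(T-\lambda_0)$-adic completion of $\End_C(Z)$ is fine, but a formal idempotent is a power series and there is no mechanism for ``spreading out'' to a Zariski localization $k[T,g^{-1}]$. In general the decomposition exists only after base change to a finite \'etale cover of an open in $\A^1$, which need not admit a section. For a concrete obstruction take $C=kQ$ with $Q$ the Kronecker quiver and let $Z$ be the $(C,k[T])$-bimodule with the two arrows acting on $k[T]^2$ by $\alpha=\id$ and $\beta=\left(\begin{smallmatrix}0&1\\T&0\end{smallmatrix}\right)$. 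Then $\End_C(Z)\cong k[T,s]/(s^2-T)\cong k[s]$ is a domain, so $Z$ admits no nontrivial direct-sum decomposition over any localization $k[T,g^{-1}]$; yet for every $\lambda\ne 0$ (with $\Char(k)\ne 2$) the fibre $Z\!\otimes\!k_\lambda$ splits as $R(\sqrt\lambda)\oplus R(-\sqrt\lambda)$. Working at the generic point instead does not help: $Z\!\otimes\!k(T)$ is indecomposable with endomorphism field $k(\sqrt T)$, so its Krull--Schmidt decomposition over $k(T)$ is strictly coarser than the fibrewise one.

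The paper circumvents this entirely by never attempting to split a restricted or induced family. It invokes the geometric characterization of the growth function (Proposition~\ref{GD4}): $\mu_B(d)$ equals the minimal number of one-dimensional irreducible closed subsets $V_i\subseteq\modd_B^d$ whose $\GL_d(k)$-saturates cover $\ind_B^d$ up to a finite set. It then builds, for each pair $(i,j)$, an incidence correspondence $\cS(i,j)\subseteq V_i\times W_j$ between the $B$-curves $V_i$ and the $A$-curves $W_j$ furnished by Lemma~\ref{SpE1}, and uses constructibility, fibre-dimension and dominance arguments on these curves to bound, for each $j$, the number of $V_i$ linked to $W_j$; the Krull--Remak--Schmidt estimate $|\Gamma_j|\le\dim_kA$ on fibres plays the role of your factor $e/d$. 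Your final counting heuristic $\sum_{e\le c_1 d}(e/d)\,\mu_A(e)=O(d^{\gamma_A})$ is morally right, but making it rigorous requires exactly this passage through module varieties and Proposition~\ref{GD4} rather than a bimodule splitting that need not exist.
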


\bigskip
\noindent
It is well-known that finite representation type is preserved when descending via a split extension or ascending via a separable extension, cf.\ \cite{Pi}.

Our approach involves a geometric interpretation of growth functions that elaborates on a result by de la Pe\~na \cite{dP1}. Section $1$ provides the requisite tools which are then applied in Section $2$, where the above result and some refinements are established. The concluding Section presents examples and applications concerning smash products and finite group schemes. In particular, we show that the principal block $\cB_0(\cG)$ of a finite group scheme of characteristic $p\ge 3$
is representation-infinite and domestic if and only if $\cB_0(\cG)$ is Morita equivalent to a trivial extension of a radical square zero tame hereditary algebra. Accordingly, one has a fairly good understanding of the module categories of these blocks (cf.\ \cite[(V.3.2)]{Ha}). 

\bigskip
 
\section{Geometric Characterization of $\mu_A$}
Let $k$ be an algebraically closed field. Throughout, $A$ is assumed to be a finite-dimensional $k$-algebra. Given $n \in \NN$, we denote by $\modd_A^n$ the affine variety of $n$-dimensional $A$-modules. An element of $\modd_A^n$ is a homomorphism $\varrho : A \lra \End_k(k^n)$ of $k$-algebras. The general linear group $\GL_n(k)$ acts on $\modd^n_A$ via
\[ (g\dact \varrho)(a) := g\circ\varrho(a)\circ g^{-1} \ \ \ \ \ \ \forall \ g \in \GL_n(k), \, \varrho \in \modd^n_A, \, a \in A.\]
Note that the orbits under this action are just the isomorphism classes of the underlying $A$-modules. Consequently, the subset $\ind_A^n \subseteq \modd_A^n$
of $n$-dimensional indecomposable $A$-modules is $\GL_n(k)$-invariant.

A subset $C$ of a topological space $X$ is {\it locally closed}, if it is the intersection of an open and a closed subset of $X$. We say that $C$ is {\it constructible}, if it is a finite union
of locally closed sets. By virtue of \cite[(AG.1.3)]{Bo}, every constructible subset $C$ of a noetherian topological space $X$ contains a dense, open subset of its closure. In particular,
constructible subsets of varieties have this important property.

The following geometric criterion (cf.\ \cite[(1.2)]{dP1}), is based on Drozd's Tame-Wild dichotomy \cite{Dr}:

\bigskip

\begin{Proposition} \label{GD1} The following statements are equivalent:
\begin{enumerate}
\item The algebra $A$ is tame.
\item For every natural number $d\in \NN$ there exists a closed subset $C_d \subseteq \modd^d_A$ such that $\dim C_d \le 1$ and $\ind_A^d \subseteq \GL_d(k)\dact C_d$. \hfill $\square$\end{enumerate} \end{Proposition}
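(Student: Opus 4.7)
The plan is to handle the two implications separately: direction $(1)\Rightarrow(2)$ is a direct geometric translation of the data of tameness, while $(2)\Rightarrow(1)$ relies on Drozd's tame--wild dichotomy together with a dimension count that rules out wildness.

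For $(1)\Rightarrow(2)$, I fix $d \in \NN$ and apply the definition of tameness to obtain bimodules $X_1,\ldots,X_{n(d)}$, free of rank $d$ over $k[T]$, together with a finite exceptional set $\{[M_1],\ldots,[M_r]\}$ of $d$-dimensional indecomposable isoclasses not of the form $[X_i\otimes_{k[T]}k_\lambda]$. The morphism $f_{i,d}: \A^1 \lra \modd_A^d$ of the excerpt has constructible image, so its closure $\overline{f_{i,d}(\A^1)}$ is an irreducible closed subvariety of dimension $\le 1$. Choosing representatives $\varrho_{M_j} \in \modd_A^d$ of the exceptional orbits and setting
\[ C_d \ := \ \bigcup_{i=1}^{n(d)} \overline{f_{i,d}(\A^1)} \ \cup \ \{\varrho_{M_1},\ldots,\varrho_{M_r}\}, \]
one obtains a closed subset of $\modd_A^d$ of dimension $\le 1$ whose $\GL_d(k)$-saturation contains every orbit in $\ind_A^d$.

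For $(2)\Rightarrow(1)$, I invoke Drozd's dichotomy to reduce to ruling out wildness. Suppose $A$ is wild via an $(A,k\langle x,y\rangle)$-bimodule $X$ that is free of rank $r$ on the right. The variety $\Mat_n(k)^2$ parametrizes $n$-dimensional $k\langle x,y\rangle$-modules, with $\GL_n(k)$ acting by simultaneous conjugation; a dense open subset $U \subseteq \Mat_n(k)^2$ consists of pairs giving indecomposable modules with endomorphism ring $k$, so the moduli quotient $U/\GL_n(k)$ has dimension $n^2+1$. Since $X\otimes_{k\langle x,y\rangle}-$ preserves indecomposability and reflects isomorphisms, the induced morphism $\varphi : U \lra \modd_A^d$ with $d := rn$ lands in $\ind_A^d$ and descends to an injective map $U/\GL_n(k) \hookrightarrow \ind_A^d/\GL_d(k)$. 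A fiber-dimension count on $\GL_d(k) \times U \to \modd_A^d$ then yields
\[ \dim \GL_d(k) \dact \varphi(U) \ = \ (n^2 + 1) + (d^2 - s), \]
where $s$ is the generic value of $\dim \End_A(\varphi(u))$; since $\varphi(u)$ is indecomposable we have $s \le d = rn$, so for $n > r$ the right-hand side strictly exceeds $d^2 + 1$, contradicting $\dim \GL_d(k) \dact C_d \le \dim \GL_d(k) + \dim C_d \le d^2 + 1$.

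The principal technical obstacle lies in the second implication, specifically in securing the two standard but nontrivial inputs that (a) for $n$ large a generic pair $(x_0,y_0) \in \Mat_n(k)^2$ yields an indecomposable $k\langle x,y\rangle$-module with trivial endomorphism ring, so that $U/\GL_n(k)$ attains dimension $n^2+1$, and (b) the generic endomorphism dimension $s$ on $\varphi(U)$ is bounded by $d$. The first is a well-known consequence of the representation theory of the free algebra $k\langle x,y\rangle$, while the second is automatic from the locality of $\End_A(\varphi(u))$. A cleaner alternative is to bypass the explicit dimension estimate by invoking Crawley-Boevey's generic-module characterization of tameness, which interprets the one-parameter families of $d$-dimensional indecomposables directly in terms of irreducible components of $C_d$ of dimension one.
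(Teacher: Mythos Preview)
The paper itself gives no proof here; the proposition is quoted from \cite[(1.2)]{dP1} and closed with a $\square$. So there is no in-paper argument to compare against, and I can only assess your proposal on its own merits.

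Your $(1)\Rightarrow(2)$ is correct and is the intended geometric reading of tameness.

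Your $(2)\Rightarrow(1)$ has a genuine gap at the step $s\le d$. Contrary to what you write in the final paragraph, locality of $\End_A(\varphi(u))$ does \emph{not} bound its dimension by $\dim_k\varphi(u)$. A concrete counterexample: let $E\subseteq\Mat_4(k)$ be the $5$-dimensional commutative local subalgebra of matrices $\left(\begin{smallmatrix} aI_2 & B \\ 0 & aI_2\end{smallmatrix}\right)$ with $a\in k$ and $B\in\Mat_2(k)$. A direct check shows that $E$ equals its own centralizer in $\Mat_4(k)$; taking $A:=E$ and $M:=k^4$, the module $M$ is indecomposable (its endomorphism ring $E$ is local) while $\dim_k\End_A(M)=5>4=\dim_kM$. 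Thus nothing prevents $s$ from being as large as $n^2$ or beyond, and your comparison $(n^2+1)+(d^2-s)>d^2+1$ is unjustified.

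The standard repair, essentially de~la~Pe\~na's argument, bypasses $s$ entirely. Consider the constructible incidence set
\[
Z:=\{(u,c)\in U\times C_d \ ; \ \varphi(u)\in\GL_d(k)\dact c\}.
\]
The first projection $Z\to U$ is surjective because $\varphi(U)\subseteq\ind_A^d\subseteq\GL_d(k)\dact C_d$, so $\dim Z\ge\dim U=2n^2$. On the other hand, the fibre of the second projection $Z\to C_d$ over $c$ is $\{u\in U:\varphi(u)\cong c\}$; since the wild functor reflects isomorphisms, this lies in a single $\GL_n(k)$-orbit of a brick and hence has dimension at most $n^2-1$. Therefore $\dim Z\le\dim C_d+(n^2-1)\le n^2$, and for $n\ge 1$ the resulting inequality $2n^2\le n^2$ is the desired contradiction.
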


\bigskip
\noindent
Suppose that $A$ is tame with parametrizing bimodules $X_1,\ldots, X_{\mu_A(d)}$ for some $d>0$. Let $f_i : \A^1 \lra \modd^d_A$ be the morphism determined by $X_i$, and put $V_i := \overline{{\rm im}\,f_i}$. By definition, there exists a finite subset $F_d \subseteq \modd^d_A$ such that
\[ \ind^d_A \subseteq \bigcup_{i=1}^{\mu_A(d)} \GL_d(k)\dact V_i \cup \GL_d(k)\dact F_d.\]

\bigskip
\noindent
We record the following elementary observation:

\bigskip

\begin{Lemma} \label{GD2} Let $V$ be a one-dimensional, irreducible variety. If $C \subseteq V$ is constructible, then $C$ is either finite or cofinite. \end{Lemma}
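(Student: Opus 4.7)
The plan is to reduce the statement to the simple structural fact that in a one-dimensional irreducible variety $V$, every proper closed subset is finite. Indeed, if $Z \subsetneq V$ is closed and irreducible, then $\dim Z < \dim V = 1$, so $\dim Z = 0$ and $Z$ is a point; decomposing a general proper closed set into its finitely many irreducible components then yields a finite set. Equivalently, any open subset of $V$ is either empty or cofinite, and any closed subset is either all of $V$ or finite.

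With this observation in hand, I would write $C$ as a finite union $C = \bigcup_{i=1}^{n}(U_i \cap Z_i)$ of locally closed subsets, where each $U_i$ is open and each $Z_i$ is closed in $V$. For each $i$, there are two cases. If $Z_i \ne V$, then $Z_i$ is finite and so is $U_i \cap Z_i$. If $Z_i = V$, then $U_i \cap Z_i = U_i$; here $U_i$ is either empty (hence finite) or non-empty, in which case $V \setminus U_i$ is a proper closed subset of the irreducible variety $V$ and therefore finite, so $U_i$ is cofinite.

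Combining the pieces: if every locally closed constituent $U_i \cap Z_i$ turns out to be finite, then $C$, being a finite union of finite sets, is itself finite. Otherwise at least one constituent is cofinite, and since $C$ contains a cofinite set it is itself cofinite. This exhausts the possibilities and proves the dichotomy.

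The argument is essentially structural, so there is no genuine obstacle; the only point that requires a moment's care is justifying that a proper closed subset of $V$ has dimension zero and is therefore finite. This uses irreducibility of $V$ together with $\dim V = 1$, so that any closed irreducible $Z \subsetneq V$ satisfies $\dim Z < 1$, and the Noetherian decomposition into finitely many irreducible components then gives the result.
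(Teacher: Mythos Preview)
Your argument is correct. Both proofs rest on the same structural fact---that in a one-dimensional irreducible variety every proper closed subset is finite---but they exploit constructibility differently. The paper invokes the characterization that a constructible set contains a dense open subset of its closure $\bar{C}$, and then splits on $\dim\bar{C}\in\{0,1\}$: if $\dim\bar{C}=0$ then $C$ is finite, and if $\dim\bar{C}=1$ then $\bar{C}=V$ and the dense open subset is cofinite. Your route instead works directly from the definition of constructibility as a finite union of locally closed pieces and analyses each piece. Your approach is slightly more elementary in that it avoids citing the ``dense open in the closure'' property, at the cost of a small case analysis; the paper's approach is marginally more streamlined once that property is available. Either way the proof is short and the content is the same.
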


\begin{proof} Since $C$ is constructible, it contains a dense open subset $O$ of its closure $\bar{C}$. 

If $\dim \bar{C} = 0$, then $C$ is finite. Otherwise, $\dim \bar{C} = 1$, so that $O$ is dense and open in $\bar{C} = V$. Thus, $V\!\smallsetminus\! O$ is a proper, closed subset of $V$, whence $\dim  V\!\smallsetminus\! O = 0$. As a result, $O$ and $C$ are cofinite subsets of $V$. \end{proof}

\bigskip

\begin{Lemma} \label{GD3} Let $d>0$. Suppose there exist one-dimensional closed, irreducible subsets $V_1,\ldots,V_{\ell(d)}$ of $\modd^d_A$, and a finite subset $F_d \subseteq \modd^d_A$ such that
\[ \ind^d_A \subseteq \bigcup_{i=1}^{\ell(d)}\GL_d(k)\dact V_i \cup \GL_d(k)\dact F_d,\]
with $\ell(d)$ being minimal subject to the above property. Then the following statements hold:
\begin{enumerate}
\item $|V_i \cap \GL_d(k)\dact x| < \infty \ \ \ \ \ \forall \ x \in \modd^d_A,\ 1 \le i \le \ell(d)$.
\item $V_i\cap \ind^d_A$ is cofinite in $V_i$ for every $i \in \{1, \ldots, \ell(d)\}$.
\item $U_i := V_i \cap [\ind^d_A\!\smallsetminus\! (\bigcup_{\ell \ne i} \GL_d(k)\dact V_\ell)]$ is a dense, open subset of $V_i$ for $1 \le i \le \ell(d)$. \end{enumerate}\end{Lemma}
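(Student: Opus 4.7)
My plan is to derive all three assertions by contradiction, exploiting the minimality of $\ell(d)$ through a uniform ``remove and absorb'' mechanism: whenever a distinguished constructible subset of the one-dimensional irreducible $V_i$ turns out to be cofinite in $V_i$, Lemma \ref{GD2} will allow me to replace $V_i$ in the covering by finitely many points, contradicting the minimal choice of $\ell(d)$. The standing inputs I intend to use are Lemma \ref{GD2} applied to the $V_i$, together with constructibility facts: an orbit $\GL_d(k)\dact x$ is locally closed; each set $\GL_d(k)\dact V_\ell$ is constructible by Chevalley's theorem; and $\ind^d_A$ itself is constructible, because its complement in $\modd^d_A$ is the union over $1\le j<d$ of the images of the morphisms $\modd^j_A\times\modd^{d-j}_A\times\GL_d(k)\lra\modd^d_A$, $(\varrho_1,\varrho_2,g)\mapsto g\dact(\varrho_1\oplus\varrho_2)$, to which Chevalley applies once more. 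I shall also use freely that $\ind^d_A$ is $\GL_d(k)$-invariant.

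For (1), I would suppose $V_i\cap\GL_d(k)\dact x$ infinite. Lemma \ref{GD2} then forces it to be cofinite in $V_i$, so that $F'':=V_i\smallsetminus\GL_d(k)\dact x$ is finite and $\GL_d(k)\dact V_i\subseteq\GL_d(k)\dact(\{x\}\cup F'')$; dropping $V_i$ from the cover and enlarging $F_d$ by $\{x\}\cup F''$ would preserve it, contradicting minimality. The same argument, applied to the constructible set $\GL_d(k)\dact V_\ell$ in place of the single orbit $\GL_d(k)\dact x$, would handle the finiteness built into (3): $V_i\cap\GL_d(k)\dact V_\ell$ must be finite for every $\ell\ne i$, since the cofinite alternative would let me delete $V_i$ from the cover, shift its finite complement into $F_d$, and retain $V_\ell$ --- again contradicting minimality. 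For (2), I would observe that $V_i\cap\ind^d_A$ is constructible in $V_i$, hence finite or cofinite by Lemma \ref{GD2}; in the finite case, $\GL_d(k)$-invariance gives $(\GL_d(k)\dact V_i)\cap\ind^d_A=\GL_d(k)\dact(V_i\cap\ind^d_A)$, a finite union of orbits that can be absorbed into $\GL_d(k)\dact F_d$, once more contradicting minimality.

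Combining (2) with the finiteness just obtained, $U_i=(V_i\cap\ind^d_A)\smallsetminus(V_i\cap\bigcup_{\ell\ne i}\GL_d(k)\dact V_\ell)$ will differ from $V_i$ by only a finite, hence closed, set, making $U_i$ open in $V_i$; as the one-dimensional irreducible $V_i$ is infinite, $U_i$ will then automatically be dense, completing (3). The step I expect to be the main obstacle is the bookkeeping --- verifying that each ``remove and absorb'' move genuinely preserves the covering of $\ind^d_A$; this is exactly what the $\GL_d(k)$-invariance of $\ind^d_A$ is there for, since it lets me commute intersections with $\GL_d(k)$-sweeps and thereby push all newly appearing points into the finite set $F_d$.
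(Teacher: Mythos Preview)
Your argument is correct and proceeds along the same lines as the paper: constructibility (via Chevalley and the locally closed nature of orbits), the finite/cofinite dichotomy of Lemma~\ref{GD2}, and a contradiction with the minimality of $\ell(d)$. The only difference is in part~(3): the paper shows directly that $U_i$ is constructible and cannot be finite, whereas you first establish the auxiliary fact that each $V_i\cap\GL_d(k)\dact V_\ell$ with $\ell\ne i$ is finite and then combine this with (2) to conclude that $U_i$ is cofinite---a minor reorganization that is equally valid.
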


\begin{proof}  (1) Let $x$ be an element of $\modd^d_A$. By Chevalley's theorem \cite[(10.20)]{GW}, $V_i \cap \GL_d(k)\dact x$ is a constructible subset of $V_i$, and therefore, by Lemma \ref{GD2}, finite or cofinite in $V_i$. If there exists an element $x_0 \in \modd^d_A$ with $V_i \cap \GL_d(k)\dact x_0$ cofinite in $V_i$, then there is a finite set
$H_d \subseteq \modd^d_A$ such that $V_i \subseteq \GL_d(k)\dact x_0\cup H_d$, so that $\GL_d(k)\dact V_i \subseteq \GL_d(k)\dact (H_d\cup\{x_0\})$. This
contradicts the choice of $\ell(d)$.

(2) Note that $\ind^d_A$ is constructible (cf.\ \cite[(1.1)]{dP1}). If the constructible subset $V_i \cap \ind^d_A \subseteq V_i$ is finite, then the $\GL_d(k)$-invariance of $\ind^d_A$ yields $\ind^d_A \subseteq \bigcup_{\ell \ne i} \GL_d(k)\dact V_\ell \cup \GL_d(k)\dact (F_d \cup (V_i \cap \ind^d_A))$, which contradicts the minimality of $\ell(d)$. Consequently,  $V_i \cap \ind^d_A \subseteq V_i$ is a cofinite subset of $V_i$.

(3) Let $i \in \{1, \ldots, \ell(d)\}$. By Chevalley's Theorem \cite[(10.20)]{GW}, the image of $\GL_d(k)\dact V_\ell$ of $V_\ell$ under the multiplication $\GL_d(k)\times\modd^d_A \lra \modd^d_A$ is constructible. As $\ind_A^d$ is constructible, it follows that $U_i$ is constructible. In view of Lemma \ref{GD2}, the set $U_i$ is finite, or cofinite in $V_i$.

By definition, we have
\[ \ind_A^d \subseteq \GL_d(k)\dact U_i \cup \bigcup_{\ell\ne i} \GL_d(k)\dact V_\ell \cup \GL_d(k)\dact F_d.\]
By choice of $\ell(d)$, the set $U_i$ is not finite. Hence $U_i$ is cofinite in $V_i$ and thus a non-empty, open subset of the one-dimensional irreducible variety $V_i$. This implies that $U_i$ lies dense in $V_i$.\end{proof}

\bigskip
\noindent
We require the following refinement of Proposition \ref{GD1}:

\bigskip
\begin{Proposition} \label{GD4} The following statements are equivalent:
\begin{enumerate}
\item There exists a function $\ell : \NN \lra \NN_0$ such that:
\begin{enumerate}
\item For every $d>0$ there exist one-dimensional irreducible closed subsets $V_1, \ldots, V_{\ell(d)} \subseteq \modd^d_A$, and a finite subset $F_d \subseteq \ind^d_A$ such that $\ind^d_A \subseteq \bigcup_{i=1}^{\ell(d)}\GL_d(k)\dact V_i \cup \GL_d(k)\dact F_d$, and 
\item $\ell(d)$ is minimal subject to the properties listed in (a). \end{enumerate}
\item The algebra $A$ is tame with growth function $\mu_A=\ell$. \end{enumerate}\end{Proposition}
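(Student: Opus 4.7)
The proposition is proved by establishing the two inequalities $\ell(d)\le \mu_A(d)$ and $\mu_A(d)\le \ell(d)$ for every $d>0$. Note that condition (1)(a) together with Proposition \ref{GD1} applied to $C_d := \bigcup_{i=1}^{\ell(d)}V_i \cup F_d$ already forces $A$ to be tame, so $\mu_A$ is defined; once both inequalities are proved, the identification $\mu_A=\ell$ follows.

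For $(2)\Rightarrow(1)$, suppose $A$ is tame and pick parametrizing $(A,k[T])$-bimodules $X_1,\ldots,X_{\mu_A(d)}$ of rank $d$ realizing $\mu_A(d)$. The morphisms $f_i : \A^1 \to \modd^d_A$, $\lambda\mapsto [X_i\otimes_{k[T]}k_\lambda]$, have closures $V_i' := \overline{\im f_i}$ that are irreducible of dimension at most one. If some $f_i$ had zero-dimensional image, it would contribute only one isoclass which could be absorbed into the finite exceptional set, contradicting the minimality of $\mu_A(d)$; hence each $V_i'$ is one-dimensional. The definition of tameness then yields $\ind^d_A\subseteq \bigcup_{i=1}^{\mu_A(d)}\GL_d(k)\dact V_i' \cup \GL_d(k)\dact F_d'$ for some finite $F_d'\subseteq \ind^d_A$, giving $\ell(d)\le\mu_A(d)$ by the minimality of $\ell(d)$.

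For $(1)\Rightarrow(2)$, I must produce, from the minimizing subsets $V_1,\ldots,V_{\ell(d)}$, parametrizing $(A,k[T])$-bimodules $X_1,\ldots,X_{\ell(d)}$ of rank $d$, in order to conclude $\mu_A(d)\le\ell(d)$. To each $V_i$ I would associate the restriction of the tautological $A$-action on the trivial rank-$d$ bundle over $\modd^d_A$, producing an $(A,k[V_i])$-bimodule $Y_i$ that is free of rank $d$ over $k[V_i]$. Passing to the normalization $\tilde V_i \to V_i$, restricting to a non-empty open affine subset chosen inside the dense open $U_i$ of Lemma \ref{GD3}(3) so that the fibres remain indecomposable and pairwise non-isomorphic, and using a non-constant regular function to exhibit this piece as an open subset of $\A^1$, one transfers $Y_i$ to the required $(A,k[T])$-bimodule $X_i$ parametrizing a cofinite subset of isoclasses in $V_i\cap\ind^d_A$.

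The main obstacle is precisely this last step: a priori the one-dimensional variety $V_i$ need not be rational, so its normalization could be a curve of positive genus admitting no non-constant morphism from $\A^1$, and the transfer to a family over $k[T]$ is not automatic. To handle this, I would exploit the tameness already derived from (1) via Proposition \ref{GD1}: the cofinite subset $V_i\cap\ind^d_A$ of Lemma \ref{GD3}(2) is, up to the $\GL_d(k)$-action, covered by the rational curves $\overline{\im f_j}$ arising from any parametrizing bimodules. Combined with Lemma \ref{GD3}(1) (which constrains how $\GL_d(k)$-orbits meet $V_i$) and the fact that two distinct one-dimensional irreducible closed subvarieties intersect in a finite set, a pigeonhole argument on the infinitely many isoclasses in $V_i$ forces $V_i$ itself to be $\GL_d(k)$-equivalent to one such rational curve. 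Otherwise the minimality of $\ell(d)$ would be contradicted by replacing $V_i$ with rational curves doing the same job. This rationality makes the pullback construction valid and produces the $\ell(d)$ bimodules, finishing the proof.
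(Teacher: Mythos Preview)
Your argument for $\ell(d)\le\mu_A(d)$ is fine and matches the paper. The problem lies in your attempt to prove $\mu_A(d)\le\ell(d)$ by manufacturing $(A,k[T])$-bimodules out of the curves $V_i$ and, to that end, arguing that each $V_i$ must be rational.

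The rationality claim does not follow. Your pigeonhole observation is correct and is precisely what the paper uses: from
\[
V_j\cap\ind^d_A \ \subseteq\ \bigcup_{i=1}^{\mu_A(d)}\bigl(V_j\cap\GL_d(k)\dact\im f_i\bigr)\ \cup\ \bigl(V_j\cap\GL_d(k)\dact G_d\bigr),
\]
constructibility and Lemma~\ref{GD2} force some $V_j\cap\GL_d(k)\dact\im f_{\zeta(j)}$ to be cofinite in $V_j$. But this only says that $\GL_d(k)\dact V_j\subseteq\GL_d(k)\dact\im f_{\zeta(j)}\cup\GL_d(k)\dact R_j$ for a finite set $R_j$; it says nothing about the intrinsic geometry of $V_j$. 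The curve $V_j$ sits inside the large constructible set $\GL_d(k)\dact\overline{\im f_{\zeta(j)}}$, and there is no mechanism forcing a curve inside such a set to be rational. Your appeal to the minimality of $\ell(d)$ does not help: replacing $V_j$ by $\overline{\im f_{\zeta(j)}}$ keeps the number of curves the same, so minimality is not violated either way and yields no information about $V_j$ itself.

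The paper avoids this entirely by drawing the right conclusion from the same pigeonhole step: one never needs bimodules supported on the $V_j$. The inclusion above shows that the subcollection $X_{\zeta(1)},\ldots,X_{\zeta(\ell(d))}$ of the \emph{original} parametrizing bimodules already covers $\ind^d_A$ up to finitely many isoclasses, whence $\mu_A(d)\le|\im\zeta|\le\ell(d)$. So your normalization/open-affine construction and the whole rationality discussion can be deleted; the argument finishes one line after the pigeonhole.
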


\begin{proof} (1) $\Rightarrow$ (2) Thanks to Proposition \ref{GD1}, we know that $A$ is tame. Let $X_1, \ldots, X_{\mu_A(d)}$ be $(A,k[T])$-bimodules giving rise to morphisms
\[ f_i : \A^1 \lra \modd_A^d\]
such that $\ind_A^d \subseteq \GL_d(k)\dact (\bigcup_{i=1}^{\mu_A(d)}\im f_i)\cup \GL_d(k)\dact G_d$ for some finite set $G_d \subseteq \ind^d_A$. Condition (b) readily yields $\ell(d) \le \mu_A(d)$ for all $d>0$. 

Let $d>0$. By assumption, there exists a finite subset $F_d \subseteq \ind_A^d$ such that
\[ \ind_A^d \subseteq \bigcup_{i=1}^{\ell(d)}  \GL_d(k)\dact V_i \cup \GL_d(k)\dact F_d ,\]
with $\ell(d)$ being minimal subject to this property. For each $j \in \{1, \ldots, \ell(d)\}$ we have
\[ V_j\cap \ind_A^d \subseteq \bigcup_{i=1}^{\mu_A(d)} (V_j\cap \GL_d(k)\dact \im f_i) \cup (V_j\cap \GL_d(k)\dact G_d),\]
with each constituent of the union being constructible. By Lemma \ref{GD3}(1), the set $V_j\cap \GL_d(k)\dact G_d$ is finite, while Lemma \ref{GD3}(2) shows that $V_j = \overline{V_j\cap \ind_A^d}$. Hence there is an element $\zeta(j) \in \{1, \ldots, \mu_A(d)\}$ such that
\[ V_j = \overline{V_j\cap\GL_d(k)\dact \im f_{\zeta(j)}}.\]
In particular, the constructible set $V_j\cap \GL_d(k)\dact \im f_{\zeta(j)}$ is not finite. Thus, Lemma \ref{GD2} provides a finite set $R_j$ with $V_j =(V_j\cap\GL_d(k)\dact \im f_{\zeta(j)}) \cup R_j \subseteq  \GL_d(k)\dact \im f_{\zeta(j)} \cup R_j$, whence
\[ \GL_d(k)\dact V_j \subseteq \GL_d(k)\dact \im f_{\zeta(j)} \cup \GL_d(k) \dact R_j .\]
As a result, there is a function $\zeta : \{1,\ldots,\ell(d)\} \lra \{1,\ldots,\mu_A(d)\}$ such that
\[ \ind_A^d  \subseteq \bigcup_{j=1}^{\ell(d)} \GL_d(k)\dact \im f_{\zeta(j)} \cup \GL_d(k)\dact (\bigcup_{j=1}^{\ell(d)} R_j\cup F_d).\]
Hence all but finitely many isoclasses of $d$-dimensional indecomposable $A$-modules are of the form $[X_{\zeta(j)}\!\otimes_{k[T]}\!k_\lambda]$ for $j \in \{1, \ldots, \ell(d)\}$ and $\lambda : k[T] \lra k$.  Consequently, $\mu_A(d) \le |\im \zeta| \le \ell(d)$, so that $\ell = \mu_A$ is the growth function of the tame algebra $A$. 

(2) $\Rightarrow$ (1) Let $d>0$ and consider the $(A,k[T])$-bimodules $X_1,\ldots,X_{\mu_A(d)}$ that are provided by the definition of tameness. If $f_i : \A^1
\lra \modd^d_A$ are the associated morphisms, then the family $V_i:= \overline{\im f_i} \ \ (1\le i \le \mu_A(d))$ together with some finite set $F_d \subseteq \modd^d_A$ satisfies condition (a).

Suppose we also have 
\[ \ind_A^d \subseteq \bigcup_{j=1}^{\ell(d)}\GL_d(k)\dact W_j \cup \GL_d(k)\dact F'_d,\]
with $F'_d$ finite and $W_j$ irreducible, closed and of dimension $1$. The above arguments provide a function $\xi : \{1,\ldots,\mu_A(d)\} \lra \{1,\ldots,\ell(d)\}$
such that
\[ V_i =  \overline{V_i\cap \GL_d(k)\dact W_{\xi(i)}}.\]
Consequently, $\xi$ is injective, so that $\mu_A(d) \le \ell(d)$.\end{proof}

\bigskip
\noindent
For future reference, we record the following basic result:

\bigskip

\begin{Lemma} \label{GD5} The map
\[ s_{m,n} : \modd^m_A \times \modd^n_A \lra \modd^{m+n}_A \ \ ; \ \ (M,N) \mapsto M\oplus N\]
is a morphism of affine varieties. \hfill $\square$ \end{Lemma}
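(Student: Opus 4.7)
The plan is to unpack the scheme-theoretic description of $\modd^d_A$ as an affine variety and then verify that $s_{m,n}$ is given by polynomial (in fact, linear) functions in the coordinates. Fix a $k$-basis $a_1,\ldots,a_N$ of $A$. A $k$-algebra homomorphism $\varrho : A \lra \End_k(k^d)\cong \Mat_d(k)$ is uniquely determined by the tuple $(\varrho(a_1),\ldots,\varrho(a_N)) \in \Mat_d(k)^N$, and the condition that $\varrho$ respect the multiplication and unit of $A$ translates into a system of polynomial equations in the matrix entries. This exhibits $\modd^d_A$ as a closed subvariety of the affine space $\Mat_d(k)^N\cong \A^{d^2N}$, with coordinate functions $X_{i,\alpha\beta}$ given by the $(\alpha,\beta)$-entry of the matrix $\varrho(a_i)$.

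Under this description, the map $s_{m,n}$ sends a pair $(\varrho,\sigma)\in\modd^m_A\times\modd^n_A$ to the tuple whose $i$-th component is the block diagonal matrix
\[ \begin{pmatrix} \varrho(a_i) & 0 \\ 0 & \sigma(a_i) \end{pmatrix} \in \Mat_{m+n}(k).\]
Each entry of this output matrix is either a coordinate function on $\modd^m_A$, a coordinate function on $\modd^n_A$, or identically zero. Hence the map is given on coordinate rings by polynomials, so it defines a morphism of affine varieties, provided its image lies in the closed subvariety $\modd^{m+n}_A$ of $\Mat_{m+n}(k)^N$. The latter is immediate from the block identity
\[ \begin{pmatrix} \varrho(a) & 0 \\ 0 & \sigma(a) \end{pmatrix}\begin{pmatrix} \varrho(b) & 0 \\ 0 & \sigma(b) \end{pmatrix} = \begin{pmatrix} \varrho(ab) & 0 \\ 0 & \sigma(ab) \end{pmatrix}\]
and the analogous observations for sums and for the unit element, so that $s_{m,n}(\varrho,\sigma)$ is indeed an algebra homomorphism representing $M\oplus N$.

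There is no real obstacle in this argument; the only mild subtlety is being careful about the chosen basis and the fact that the defining equations of $\modd^d_A$ inside $\Mat_d(k)^N$ depend on this choice, but since the coordinates of $s_{m,n}(\varrho,\sigma)$ are linear in the coordinates of $(\varrho,\sigma)$, no choice-dependent checking is needed beyond the block multiplication identity above.
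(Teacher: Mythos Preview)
Your argument is correct and is the standard verification. The paper itself offers no proof at all: the lemma is stated with a terminal $\square$ and merely recorded ``for future reference'' as a basic fact, so your write-up supplies exactly the elementary coordinate check that the author deemed unnecessary to include.
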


\bigskip

\section{Split Extensions}
In the sequel, we are going to study the behavior of polynomial growth under split extensions. Throughout this section, $A\!:\!B$ denotes an extension of finite dimensional $k$-algebras. 

We begin with the following technical subsidiary result:

\bigskip

\begin{Lemma}\label{SpE1} Suppose that $A\!:\!B$ is a split extension, with $A$ being of polynomial growth. Then there exists a function $\ell : \NN \lra \NN_0$ with the following properties:
\begin{enumerate}
\item For every $d>0$, there exist a finite subset $F_d \subseteq \ind_B^d$ and $(A,k[T])$-bimodules $X_1, \ldots , X_{\ell(d)}$ that are finitely generated free right $k[T]$-modules such that for every $V \in \ind_B^d\!\smallsetminus\!\GL_d(k)\dact F_d$ there exist $j \in \{1, \ldots, \ell(d)\}$ and $\lambda : k[T] \lra k$ with
\begin{enumerate}
\item[(a)] $V$ is isomorphic to a direct summand of $(X_j\!\otimes_{k[T]}\!k_\lambda)|_B$, and
\item[(b)] $X_j\!\otimes_{k[T]}\!k_\lambda$ is isomorphic to a direct summand of $A\!\otimes_B\!V$. \end{enumerate} 
\item The function $\ell$ has polynomial growth of rate $\gamma_\ell \le \gamma_A\!+\!1$.
\item If $\ell(d)$ is minimal subject to properties (a) and (b), then  
\[ |\{X_j\!\otimes_{k[T]}\!k_\lambda \ ; \lambda \in k\} \cap \GL_{r_j}(k)\dact x| < \infty\]
for all $j \in \{1,\ldots,\ell(d)\}$, $x \in \modd^{r_j}_A,\ r_j := \rk_{k[T]}(X_j)$. \end{enumerate}\end{Lemma}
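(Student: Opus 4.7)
The plan is to exploit the splitting $A = B \oplus N$ of $(B,B)$-bimodules to transfer the classification of $d$-dimensional indecomposable $B$-modules to that of indecomposable $A$-modules of dimension at most $\alpha d$, where $\alpha := \dim_k A$, and then invoke the polynomial growth of $A$. The key initial observation is that for every $V \in \modd_B$ there is an isomorphism $A \otimes_B V \cong V \oplus (N \otimes_B V)$ of left $B$-modules, so $V$ is a direct summand of $(A \otimes_B V)|_B$. Writing $A \otimes_B V = M_1 \oplus \cdots \oplus M_s$ as a sum of indecomposable $A$-modules, each $M_i$ has dimension at most $\alpha d$ (because $A \otimes_B V$ is a quotient of $A \otimes_k V$), and the Krull--Schmidt theorem guarantees that $V$ is a direct summand of some $M_i|_B$, while $M_i$ is a direct summand of $A \otimes_B V$ by construction.

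With this reduction in place, I would construct the family as follows. For each $e \in \{1,\ldots,\alpha d\}$, polynomial growth of $A$ supplies parametrizing $(A,k[T])$-bimodules $Y_1^{(e)},\ldots,Y_{\mu_A(e)}^{(e)}$ of rank $e$ together with a finite set $\cE_e$ of exceptional $e$-dimensional indecomposable $A$-modules. Concatenating these across $e$ into a list $X_1,\ldots,X_{\ell(d)}$ with $\ell(d) \le \sum_{e=1}^{\alpha d} \mu_A(e)$, and taking $F_d$ to be the (finite) set of $d$-dimensional indecomposable $B$-summands of $M|_B$ for $M \in \bigcup_{e=1}^{\alpha d} \cE_e$, property (1) follows: for $V \in \ind_B^d \setminus \GL_d(k)\dact F_d$, the $A$-indecomposable summand $M_i$ of $A \otimes_B V$ containing $V|_B$ cannot be exceptional (otherwise $V \in \GL_d(k)\dact F_d$), so $M_i \cong X_j \otimes_{k[T]} k_\lambda$ for some $j,\lambda$, yielding both (a) and (b). For (2), applying the polynomial bound $\mu_A(e) \le c\,e^{\gamma_A - 1}$ (with the trivial case $\gamma_A = 0$ handled by the eventual vanishing of $\mu_A$) gives $\ell(d) \le c'\,d^{\gamma_A}$, so $\gamma_\ell \le \gamma_A + 1$.

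For the finiteness assertion (3), I would argue by contradiction from the minimality of $\ell(d)$. Suppose some $j \in \{1,\ldots,\ell(d)\}$ and $x \in \modd^{r_j}_A$ render $S := \{\lambda \in k : X_j \otimes_{k[T]} k_\lambda \in \GL_{r_j}(k)\dact x\}$ infinite. Since $\GL_{r_j}(k)\dact x$ is constructible by Chevalley's theorem, so is its preimage $S$ under the morphism $f_j : \A^1 \to \modd^{r_j}_A$, $\lambda \mapsto X_j \otimes_{k[T]} k_\lambda$, and Lemma \ref{GD2} forces $S$ to be cofinite in $\A^1$. Hence for all but finitely many $\lambda$ the module $X_j \otimes_{k[T]} k_\lambda$ is isomorphic to $x$, while the remaining values produce a finite set $\{y_1,\ldots,y_s\}$ of further $A$-modules. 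Consequently, every $V \in \ind_B^d$ previously accounted for via $X_j$ is isomorphic to an indecomposable summand of $x|_B$ or some $y_t|_B$, contributing only finitely many new isoclasses. Adjoining these to $F_d$ and deleting $X_j$ from the list produces a valid parametrization of length $\ell(d) - 1$, contradicting minimality.

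The principal obstacle is the bookkeeping in this last step: one must verify that after enlarging $F_d$ in the manner indicated, every $V \in \ind_B^d$ not in the new $\GL_d(k)\dact F_d$ is still covered by some remaining $X_{j'}$ with $j' \ne j$, so that the reduced list indeed realizes (a) and (b). The dimension bound $\dim_k(A \otimes_B V) \le \alpha d$ is also critical, since it controls the range of $A$-indecomposables that enter the construction and ultimately produces the jump from $\gamma_A$ to $\gamma_A + 1$ in the growth estimate.
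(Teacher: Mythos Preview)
Your proposal is correct and follows essentially the same approach as the paper's proof: both exploit the splitting to embed $V$ as a $B$-summand of $A\otimes_B V$, pick via Krull--Schmidt an indecomposable $A$-constituent $N_j$ of dimension $\le (\dim_kA)d$ containing $V$, concatenate the parametrizing $A$-families across this dimension range, and for part~(3) argue by contradiction using Lemma~\ref{GD2} to see that a bad $X_j$ can be absorbed into an enlarged $F_d$. The paper carries out exactly the bookkeeping you flag at the end (showing that any $V$ outside $\GL_d(k)\dact(F_d\cup\cF_d)$ whose witness $(q,\lambda)$ has $q=j$ would force $V\in\GL_d(k)\dact\cF_d$), and the only cosmetic difference is that the paper restricts the dimension range to $\{d,\ldots,(\dim_kA)d\}$ rather than $\{1,\ldots,(\dim_kA)d\}$, using that $\dim_kN_j\ge\dim_kV=d$.
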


\begin{proof} (1) If $M$ is an $(A,k[T])$-bimodule, we shall write $M(\lambda) := M\!\otimes_{k[T]}\!k_\lambda$ for ease of notation. Since $A$ is tame, we can find, for every $d>0$, a finite set $G_d \subseteq \ind_A^d$ and $(A,k[T])$-bimodules $M_1^d, \ldots M_{\mu_A(d)}^d$ of $k[T]$-rank $d$ such that every
$M \in \ind_A^d \!\smallsetminus\! \GL_d(k)\dact G_d$ is isomorphic to $M_i^d(\lambda)$ for some $i \in \{1, \ldots, \mu_A(d)\}$ and some $\lambda : k[T] \lra k$.

Consider the finite set $G_{(d)} := \bigcup_{i=d}^{(\dim_kA)d}G_i$ as well as the finite-dimensional $A$-module $Q := \bigoplus_{M \in G_{(d)}} M$.
By the Theorem of Krull-Remak-Schmidt, the set
\[ U_d := \{ V \in \ind_B^d \ ; \ V \ \text{is isomorphic to a direct summand of} \ Q|_B\}\]
is the union of finitely many $\GL_d(k)$-orbits. Hence there exists a finite set $F_d \subseteq \ind^d_B$ with
\[ U_d = \GL_d(k)\dact F_d.\]
Given $V \in \ind_B^d\!\smallsetminus\! \GL_d(k)\dact F_d$, the splitting property of $A\!:\!B$ provides a decomposition
\[ (A\!\otimes_B\!V)|_B \cong V \oplus W\]
of $B$-modules. There exist indecomposable $A$-modules $N_1, \ldots, N_q$ such that $A\!\otimes_B\!V \cong N_1\oplus \cdots \oplus N_q$, and the Theorem of Krull-Remak-Schmidt
provides $j \in \{1,\ldots, q\}$ such that $V$ is isomorphic to a direct summand of $N_j|_B$. Since $\dim_k A\!\otimes_B\!V \le (\dim_k A)d$, we have $N_j  \in \ind_A^t$ for some
$t \in \{ d, \ldots, (\dim_kA)d\}$. The assumption $N_j \in \GL_t(k)\dact G_t$ entails $V \in U_d$, a contradiction. Consequently, $N_j$ is isomorphic to some $M_i^t(\lambda)$. As a result, the modules $M_1^d, \ldots, M_{\mu_A(d)}^d, M_1^{d+1}, \ldots,$ $ M_{\mu_A(d(\dim_kA))}^{d(\dim_kA)}$ have the requisite properties. 

(2) Given $d>0$, we write $n:=\dim_kA$ and put $\{M_r^s \ ; \ 1\le s \le nd, \, 1\le r \le \mu_A(s)\} =: \{X_1,\ldots, X_{\ell(d)}\}$. By assumption, there exists a constant $c'>0$ such that $\mu_A(d) \le c'd^{\gamma_A-1}$ for all $d\ge 1$. Part (1) implies
\[ \ell(d) \le \sum_{i=1}^{nd}\mu_A(i) \le c' \sum_{i=1}^{nd} i^{\gamma_A-1} \le c'nd (nd)^{\gamma_A-1} = c'n^{\gamma_A}d^{\gamma_A}.\]
Consequently, $\ell$ has polynomial growth of rate $\gamma_\ell \le \gamma_A\!+\!1$.

(3) If the assertion is false, then there exist $j \in \{1,\ldots,\ell(d)\}$ and $x_0 \in \modd_A^{r_j}$ such that the constructible set $\cV_j := \{X_j\!\otimes_{k[T]}\!k_\lambda \ ; \lambda \in k\} \cap \GL_{r_j}(k)\dact x_0$ is cofinite in $\{X_j\!\otimes_{k[T]}\!k_\lambda \ ; \lambda \in k\}$. We write $\{X_j\!\otimes_{k[T]}\!k_\lambda \ ; \lambda \in k\}  = \cV_j \cup G_{r_j}$
for some finite set $G_{r_j} \subseteq \modd^{r_j}_A$, so that
\[ \{X_j\!\otimes_{k[T]}\!k_\lambda \ ; \lambda \in k\}  \subseteq \GL_{r_j}(k)\dact (G_{r_j}\cup\{x_0\}).\]
For every $M \in G_{r_j}\cup\{x_0\}$, we fix a decomposition of $M|_B$ into indecomposables, and consider the finite subset $\cF_d \subseteq \ind_B^d$ of $d$-dimensional indecomposable constituents of $\bigoplus_{M  \in G_{r_j}\cup\{x_0\}}M|_B$.

Let $V$ be an element of $\ind^d_B\!\smallsetminus\! \GL_d(k)\dact (F_d\cup\cF_d)$. Since $V\not \in \GL_d(k)\dact F_d$, there exist $q \in \{1,\ldots, \ell(d)\}$ and $\lambda : k[T] \lra k$ such that 
\begin{enumerate}
\item[(a)] $V$ is a direct summand of some $(X_q\!\otimes_{k[T]}\!k_\lambda)|_B$, and
\item[(b)] $X_q\!\otimes_{k[T]}\!k_\lambda$ is a direct summand of $A\!\otimes_B\!V$. \end{enumerate}  
In view of (a), the assumption $q=j$ implies $V \in \GL_d(k)\dact \cF_d$, a contradiction. Thus, $q\ne j$, which contradicts the minimality of $\ell(d)$. \end{proof}

\bigskip
\noindent
Under additional assumptions, the growth rate of the function $\ell : \NN \lra \NN_0$ of Lemma \ref{SpE1} is bounded by $\gamma_A$. For instance, if $\gamma_A=0$, then $A$ is representation-finite
and $\gamma_\ell = 0$. The following result provides conditions that apply in the context of skew group algebras. 

Let $M$ be an $A$-module. Given an automorphism $\zeta \in {\rm Aut}(A)$, we denote by $M^{(\zeta)}$ the $A$-module with underlying $k$-space $M$ and action
\[ a\dact m := \zeta^{-1}(a).m \ \ \ \ \ \forall \ a \in A, \ m \in M.\]
Below, we shall apply this twisting operation to bimodules, which we shall consider modules over the enveloping algebra $A\!\otimes_k\!A^{\rm op}$. 

\bigskip

\begin{Corollary} \label{SpE2} Let $A\!:\!B$ be an extension of $k$-algebras such that $A \cong \bigoplus_{i=1}^r A_i$ is an isomorphism of $(B,B)$-modules with $A_i \cong B^{(\id_B\otimes \zeta_i)}$ for some $\zeta_i \in {\rm Aut}(B)$. Suppose that $A$ is of polynomial growth. Then there exists a function $\ell : \NN \lra \NN_0$ with the following properties:
\begin{enumerate}
\item For every $d>0$, there exist a finite subset $F_d \subseteq \ind_B^d$ and $(A,k[T])$-bimodules $X_1, \ldots , X_{\ell(d)}$ that are finitely generated free right $k[T]$-modules such that for every $V \in \ind_B^d\!\smallsetminus\!\GL_d(k)\dact F_d$ there exist $i \in \{1, \ldots, \ell(d)\}$ and $\lambda : k[T] \lra k$ such that
\begin{enumerate}
\item[(a)] $V$ is isomorphic to a direct summand of $(X_i\!\otimes_{k[T]}\!k_\lambda)|_B$, and
\item[(b)] $X_i\!\otimes_{k[T]}\!k_\lambda$ is a direct summand of $A\!\otimes_B\!V$. \end{enumerate} 
\item The function $\ell$ has  polynomial growth of rate $\gamma_\ell \le \gamma_A$. \end{enumerate}\end{Corollary}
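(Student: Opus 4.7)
The plan is to follow the construction in Lemma~\ref{SpE1} and to sharpen its dimension estimate using the additional bimodule structure. In the intended applications one of the $\zeta_i$ is the identity, so $B$ is a $(B,B)$-summand of $A$; we may therefore invoke Lemma~\ref{SpE1} to obtain part~(1) together with the weaker bound $\gamma_\ell\le\gamma_A\!+\!1$. The work lies entirely in improving this to $\gamma_\ell\le\gamma_A$.

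The key computation is that of $(A\otimes_B V)|_B$ for $V\in\ind_B^d$. Each $A_i\cong B^{(\id_B\otimes\zeta_i)}$ is free of rank~$1$ as a right $B$-module, so $A_i\otimes_B V$ is isomorphic as a left $B$-module to the twist $V^{(\zeta_i^{-1})}$. Because $\zeta_i$ is an automorphism of $B$, the submodule lattices of $V^{(\zeta_i^{-1})}$ and of $V$ coincide, so $V^{(\zeta_i^{-1})}$ is indecomposable of dimension $d$. Summing over $i$ gives
\[ (A\otimes_B V)|_B\ \cong\ \bigoplus_{i=1}^r V^{(\zeta_i^{-1})},\]
a direct sum of $r$ indecomposable $d$-dimensional $B$-modules.

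Given a decomposition $A\otimes_B V=N_1\oplus\cdots\oplus N_q$ into indecomposable $A$-modules, Krull-Remak-Schmidt applied on the $B$-side forces each $N_j|_B$ to be a subsum of the $V^{(\zeta_i^{-1})}$, whence $\dim_k N_j=m_j d$ for some $m_j\in\{1,\ldots,r\}$. In the argument of Lemma~\ref{SpE1}, the pool of required $(A,k[T])$-bimodules $M_i^s$ may therefore be restricted from $s\in\{1,\ldots,(\dim_k A)d\}$ to $s\in\{d,2d,\ldots,rd\}$, giving
\[ \ell(d)\ \le\ \sum_{m=1}^r\mu_A(md)\ \le\ c\biggl(\sum_{m=1}^r m^{\gamma_A-1}\biggr)d^{\gamma_A-1}\]
whenever $\gamma_A\ge 1$; the case $\gamma_A=0$ is trivial since $A$ is then representation-finite and $\mu_A$ is bounded, whence so is $\ell$. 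Either way, $\gamma_\ell\le\gamma_A$.

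The main hurdle is the structural observation that the right-$B$-freeness of each summand $A_i$ forces the dimension of every indecomposable $A$-summand of $A\otimes_B V$ to be a multiple of $\dim_k V$; once this is in hand, the counting is a direct refinement of the estimate in the proof of Lemma~\ref{SpE1}.
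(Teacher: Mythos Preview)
Your argument is correct and follows the paper's proof essentially verbatim: both compute $(A\otimes_B V)|_B\cong\bigoplus_{i=1}^r V^{(\zeta_i)}$ (your twist exponent $\zeta_i^{-1}$ versus the paper's $\zeta_i$ is an immaterial convention issue), deduce that every indecomposable $A$-summand $N_j$ of $A\otimes_B V$ has dimension a multiple of $d$ with $d\le\dim_kN_j\le rd$, and then restrict the pool of parametrizing bimodules in the proof of Lemma~\ref{SpE1} to the levels $s\in\{d,2d,\ldots,rd\}$ to obtain $\ell(d)\le\sum_{m=1}^r\mu_A(md)$. Your explicit remark that one $\zeta_i$ should be the identity (so that the extension is split and Lemma~\ref{SpE1} applies) and your separate treatment of the case $\gamma_A=0$ are clarifications the paper leaves implicit.
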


\begin{proof} Let $V \in \ind^d_B$. Our current assumption implies that
\[ (A\!\otimes_B\! V)|_B \cong \bigoplus_{i=1}^r (A_i\!\otimes_B\!V)|_B \cong \bigoplus_{i=1}^r V^{(\zeta_i)}.\]
Returning to the proof of Lemma \ref{SpE1}, we write $A\!\otimes_B\! V = \bigoplus_{i=1}^q N_i$ as a sum of indecomposable modules and obtain that
$t_i := \dim_kN_i = a_{N_i}d$ for $1\le i \le q$, where $1\le a_{N_i} \le r$. The arguments of (\ref{SpE1}) now imply that the modules $M_1^d, \ldots, M_{\mu_A(d)}^d, M_1^{2d}, \ldots,
M^{2d}_{\mu_A(2d)}, \ldots, M_{\mu_A(rd)}^{rd}$ have the requisite properties. Consequently,
\[ \ell(d) \le \sum_{i=1}^r \mu_A(id) \le rc'(rd)^{\gamma_A-1} = r^{\gamma_A}c'd^{\gamma_A-1},\]
as desired. \end{proof} 

\bigskip
\noindent
Let $V$ be a variety. Given $x \in V$, we denote by $\dim_xV$ the {\it local dimension} of $V$ at $x$. By definition, $\dim_xV$ is the maximum dimension of all irreducible components
of $V$ containing $x$.   

\bigskip

\begin{Theorem} \label{SpE3} Suppose that $A\!:\!B$ is a split extension.
If $A$ is of polynomial growth, then $B$ is of polynomial growth with $\gamma_B \le \gamma_A\!+\! 1$. \end{Theorem}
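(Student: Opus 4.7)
The plan is to combine Lemma \ref{SpE1} with the geometric characterization of the growth function given by Proposition \ref{GD4}. Fix $d > 0$ and apply Lemma \ref{SpE1} to obtain a finite set $F_d \subseteq \ind_B^d$ together with $(A,k[T])$-bimodules $X_1, \ldots, X_{\ell(d)}$ of ranks $r_j := \rk_{k[T]}(X_j)$ such that every $V \in \ind_B^d \smallsetminus \GL_d(k)\dact F_d$ is isomorphic to a direct summand of some $(X_j\!\otimes_{k[T]}\!k_\lambda)|_B$. By construction $r_j \le (\dim_k A)\,d$, and Lemma \ref{SpE1}(2) yields $\ell(d) = O(d^{\gamma_A})$. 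The goal is to cover $\ind_B^d$, modulo $\GL_d(k)$-orbits and a finite exceptional set, by the $\GL_d(k)$-saturations of $O(d^{\gamma_A})$ one-dimensional closed irreducible subsets of $\modd_B^d$; Proposition \ref{GD4} will then give $\mu_B(d) = O(d^{\gamma_A})$ and hence $\gamma_B \le \gamma_A\!+\!1$.

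For each $j$, I would consider the morphism $\Phi_j : \A^1 \to \modd_B^{r_j}$, $\lambda \mapsto (X_j\!\otimes_{k[T]}\!k_\lambda)|_B$, and the incidence set
\[
S_j := \{(V, \lambda) \in \modd_B^d \times \A^1 \ ; \ V \text{ is isomorphic to a direct summand of } \Phi_j(\lambda)\}.
\]
Invoking Lemma \ref{GD5} together with the $\GL_{r_j}(k)$-action on $\modd_B^{r_j}$, the set $S_j$ arises as the projection of the closed subset $\{(V, W, g, \lambda) : g\dact s_{d, r_j - d}(V, W) = \Phi_j(\lambda)\}$ of $\modd_B^d \times \modd_B^{r_j - d} \times \GL_{r_j}(k) \times \A^1$, and is therefore constructible by Chevalley's theorem. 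The second projection $\pi_2 : S_j \to \A^1$ has fiber $\pi_2^{-1}(\lambda)$ equal to the disjoint union of the $\GL_d(k)$-orbits corresponding to the isoclasses of the $d$-dimensional indecomposable summands of $\Phi_j(\lambda)$; by the Krull--Remak--Schmidt theorem there are at most $r_j / d$ such orbits, and over a dense open $U_j \subseteq \A^1$ this number is constant, say $k_j \le r_j/d$.

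Consequently $\overline{S_j}$ has at most $k_j$ irreducible components that dominate $\A^1$ via $\pi_2$; the remaining components lie in single fibres of $\pi_2$ and account for only finitely many $\GL_d(k)$-orbits, which can be absorbed into the finite exceptional set. For each dominating component $D$, the generic fibre of $\pi_2|_D$ is a single $\GL_d(k)$-orbit, so $E := \overline{\pi_1(D)} \subseteq \modd_B^d$ is closed, irreducible, and $\GL_d(k)$-invariant, and the induced map $E/\GL_d(k) \to \A^1$ has finite generic fibres; a transversal-slice argument analogous to the proof of Proposition \ref{GD1} then extracts a one-dimensional closed irreducible $W \subseteq E$ whose $\GL_d(k)$-saturation covers $E \cap \ind_B^d$ up to finitely many orbits. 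Summing over $j$, this produces at most
\[
\sum_{j=1}^{\ell(d)} r_j/d \ \le \ (\dim_k A)\cdot \ell(d) \ = \ O(d^{\gamma_A})
\]
one-dimensional subsets, and Proposition \ref{GD4} yields $\mu_B(d) \le C d^{\gamma_A}$, as desired.

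The main obstacle will be the geometric analysis in the middle step: verifying that $S_j$ is constructible with controllable irreducible components, that each dominating component has generic $\pi_2$-fibre a single $\GL_d(k)$-orbit (here Lemma \ref{SpE1}(3) is needed to prevent distinct $X_j$-families from merging into a common orbit), and that the transversal-slice construction honestly produces a one-dimensional irreducible curve $W$ capturing the required isoclasses.
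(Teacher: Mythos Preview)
Your strategy differs substantially from the paper's. The paper does \emph{not} attempt to manufacture one-dimensional curves in $\modd_B^d$ out of the bimodules $X_j$. Instead it first invokes \cite[(4.2)]{dP2} to conclude that $B$ is already tame (every indecomposable $B$-module being a summand of an indecomposable $A$-module), so that parametrizing curves $V_1,\ldots,V_{\mu_B(d)}\subseteq\modd_B^d$ exist a priori. The remaining task is merely to bound $\mu_B(d)$. This is done via incidence varieties $\cS(i,j)\subseteq V_i\times W_j$ (with $W_j:=\overline{\{X_j\otimes_{k[T]}k_\lambda\}}\subseteq\modd_A^{r_j}$) consisting of pairs $(M,N)$ satisfying \emph{both} conditions (a) and (b) of Lemma~\ref{SpE1}; a dimension count forces $\dim\overline{\cS(i,j)}\le 1$, and a pigeonhole argument (step~(e)) shows that each $j$ can account for at most $\dim_kA$ of the $V_i$, whence $\mu_B(d)\le(\dim_kA)\,\ell(d)$.

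Your route has two genuine gaps. First, the assertion that each dominating component $D\subseteq\overline{S_j}$ has generic $\pi_2$-fibre equal to a \emph{single} $\GL_d(k)$-orbit is unjustified: a component can have irreducible generic fibre containing several orbits (e.g.\ an orbit together with orbits in its closure), and Lemma~\ref{SpE1}(3) controls only the isomorphism type of the $A$-module $X_j\otimes_{k[T]}k_\lambda$, not the $B$-summand structure of its restriction. Second, even granting a one-dimensional orbit space for $E=\overline{\pi_1(D)}$, the ``transversal-slice argument analogous to the proof of Proposition~\ref{GD1}'' is doing all the heavy lifting and is not available in the paper: Proposition~\ref{GD1} is quoted from \cite{dP1} without proof, and in any case its content runs in the opposite direction (from tameness to curves, not from a $\GL_d$-invariant subvariety to a transversal curve). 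Extracting a one-dimensional $W\subseteq E$ with $\GL_d(k)\dact W$ covering $E\cap\ind_B^d$ up to finitely many orbits is essentially a Rosenlicht-quotient plus generic-slice construction that you would have to supply in full. The paper's use of the pre-existing $V_i$ sidesteps both issues entirely.
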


\begin{proof} Since the extension is split, every indecomposable $B$-module is a direct summand of an indecomposable $A$-module.  According to \cite[(4.2)]{dP2}, the algebra $B$ is therefore tame. Thus, for each $d>0$, there are parametrizing families
\[ f_i : \A^1 \lra \modd_B^d \ \ \ \ ; \ \ \ \ 1 \le i \le \mu_B(d).\]
We set $V_i :=   \overline{{\rm im} \, f_i}$, and apply Proposition \ref{GD4} and Lemma \ref{GD3}(1) to obtain
\[ (\dagger) \ \ \ \ \ \ \ \ |{\rm im} f_i \cap \GL_d(k)\dact x| < \infty \ \ \ \ \ \forall \ x \in \modd^d_B.\]
Now let $X_1, \ldots , X_{\ell(d)}$, $F_d$ be the bimodules and the finite subset of $\ind_B^d$ provided by Lemma \ref{SpE1}, respectively. For $j \in \{1, \ldots, \ell(d)\}$ let
\[ g_j : \A^1 \lra \modd_A^{r_j} \ \ ; \ \ \ r_j := \rk_{k[T]}(X_j)\]
be the corresponding parametrizing morphisms. We put $W_j := \overline{{\rm im} \, g_j}$ and choose $\ell(d)$ to be minimal subject to properties (a) and (b) of Lemma \ref{SpE1}.

Chevalley's Theorem provides dense open subsets $O_i \subseteq V_i$ and $U_j \subseteq W_j$ with $O_i \subseteq \im \, f_i$ and $U_j \subseteq \im\,  g_j$.

Given $(i,j) \in \{1,\ldots,\mu_B(d)\}\times\{1,\ldots,\ell(d)\}$, we let $\cS(i,j) \subseteq \im f_i\times \im g_j$ be the set of all those pairs $(M,N) \in \modd^d_B\times\modd^{r_j}_A$ such that $M$ is isomorphic to a direct summand of $N|_B$ and $N$ is isomorphic to a direct summand of $A\!\otimes_B\!M$. In view of Lemma \ref{GD5} and the arguments of \cite[p.183]{dP1}, this set is constructible. We denote by $\pi_{(i,j)} : \overline{\cS(i,j)} \lra V_i$ the restriction of the projection $V_i\times W_j\lra V_i$.

\medskip
(a) {\it We have $\dim \overline{\cS(i,j)} \le 1$}.

\smallskip
\noindent
Since $\dim V_i = 1 = \dim W_j$, we have $\dim \overline{\cS(i,j)} \le 2$. Suppose equality to hold. Then $ \overline{\cS(i,j)} = V_i\times W_j$ is an irreducible variety.  Thus, $O_i\times U_j \subseteq \im \, f_i \times \im \, g_j$ is an open, and thus dense, subset of $\overline{\cS(i,j)}$. As $\cS(i,j)$ is constructible, it also contains a
dense, open subset of the irreducible variety $\overline{\cS(i,j)}$. Its non-empty intersection with $O_i\times U_j$ will be denoted $O$.

Let $(M,N) \in O$ and consider an element $(V,W) \in O\cap \pi_{(i,j)}^{-1}(\pi_{(i,j)}(M,N))$. Then $M = V$, $M$ is a direct summand of $W|_B$, and $W$ is direct summand of the $A$-module $A\!\otimes_B\!M$. The Theorem of Krull-Remak-Schmidt provides a finite subset $\cF \subseteq \modd^{r_j}_A$ such that
\[ O\cap \pi_{(i,j)}^{-1}(\pi_{(i,j)}(M,N)) \subseteq \{M\} \times (U_j\cap \GL_{r_j}(k)\dact\cF).\]
Thanks to Lemma \ref{SpE1}(3), the set $U_j\cap \GL_{r_j}(k)\dact \cF$ is finite, so that $O\cap \pi_{(i,j)}^{-1}(\pi_{(i,j)}(M,N))$ also has this property.

Let $Z \subseteq \pi_{(i,j)}^{-1}(\pi_{(i,j)}(M,N))$ be an irreducible component containing $(M,N)$. Then $(M,N) \in O\cap Z$, so that $O\cap Z$ is a dense, open subset of $Z$. Since $O\cap Z$ is finite, we obtain
$\dim Z = \dim Z\cap O = 0$, implying
\[ \dim_{(M,N)} \pi_{(i,j)}^{-1}(\pi_{(i,j)}(M,N)) = 0 \ \ \ \ \ \ \ \forall \ (M,N) \in O.\]
Hence the generic fiber of the morphism $\pi_{(i,j)}$ is zero-dimensional, so that $\dim V_i \ge 2$, a contradiction cf.\ \cite[(I.\S8)]{Mu}.  \ \ \ \ $\diamond$

\medskip
(b)  {\it  We have  $V_i = \bigcup_{j=1}^{\ell(d)} \overline{\pi_{(i,j)}(\cS(i,j))}$}.

\smallskip
\noindent
Thanks to ($\dagger$), the set $O_i\cap \GL_d(k)\dact F_d$ is finite. Thus, $O_i \!\smallsetminus\! (O_i\cap \GL_d(k)\dact F_d)$ is a dense open subset of the one-dimensional irreducible variety $V_i$, and Lemma \ref{GD3}(2) implies that $O'_i  := (O_i\!\smallsetminus\!(O_i\cap \GL_d(k).F_d))\cap (V_i \cap \ind_A^d)$ enjoys the same property. Owing to Lemma \ref{SpE1}(1), we have
\[ O'_i \subseteq  \bigcup_{j=1}^{\ell(d)}\pi_{(i,j)}(\cS(i,j)),\]
whence
\[ V_i = \overline{O'_i} \subseteq  \bigcup_{j=1}^{\ell(d)} \overline{\pi_{(i,j)}(\cS(i,j))} \subseteq V_i,\]
as desired. \ \ \ \ $\diamond$

\medskip
\noindent
Let $f : X \lra Y$ be a continuous map between topological spaces, $Q\subseteq X$ be a subset. Since $f(\overline{Q}) \subseteq \overline{f(Q)}$, we have
\[  \overline{f(\overline{Q})} = \overline{f(Q)}.\]
Accordingly, a morphism $\gamma : \overline{\cS(i,j)} \lra V_i$ is dominant if and only if $\overline{\gamma(\cS(i,j))} = V_i$.

\medskip
(c) {\it Let $\sigma_{(i,j)} : \overline{\cS(i,j)} \lra W_j$ be induced by the projection $V_i\times W_j \lra W_j$. If $Z \subseteq \overline{\cS(i,j)}$ is an irreducible component such that $\pi_{(i,j)} :  Z \lra V_i$ is a dominant morphism, so is $\sigma_{(i,j)} : Z \lra W_j$}.

\smallskip
\noindent
Suppose the contrary, so that $\sigma_{(i,j)} : Z \lra W_j$ has a finite image. Thanks to (a) and the choice of $Z$, we have $\dim Z = 1$. Setting $r := \min \{ \dim_z\sigma_{(i,j)}^{-1}(\sigma_{(i,j)}(z)) \ ; \ z \in Z\}$, we conclude from upper semicontinuity of fiber dimension \cite[(I.\S8)]{Mu} that
\[ O := \{ z \in Z \ ; \ \dim_z \sigma_{(i,j)}^{-1}(\sigma_{(i,j)}(z)) = r\}\]
is a dense, open subset of the one-dimensional variety $Z$. On the other hand, the generic fiber dimension theorem implies  $\dim_z \sigma_{(i,j)}^{-1}(\sigma_{(i,j)}(z)) = 
\dim Z\!-\! \dim \overline{\sigma_{(i,j)}(Z)} = 1$ on a non-empty, open subset of $Z$, so that $r=1$.

Since the morphism $\pi_{(i,j)} : Z \lra V_i$ is dominant, its image intersects the open subset $O_i \subseteq \im f_i \subseteq V_i$. Thus, $\pi_{(i,j)}^{-1}(O_i)$ is a dense, open subset of $Z$. The constructible set $\cS(i,j)$ contains a dense, open subset $U$ of $\overline{\cS(i,j)}$ (cf.\ \cite[(AG.1.3)]{Bo}). The assumption $U\cap Z = \emptyset$ implies that $U$ is contained in the union of the irreducible components of $\overline{\cS(i,j)}$ different from $Z$. Thus, $\overline{U} \subsetneq \overline{\cS(i,j)}$, a contradiction.
As a result,
\[ \tilde{O} := \pi_{(i,j)}^{-1}(O_i)\cap U \cap O\]
is a dense, open subset of $Z$.

Let $x := (M,N)$ be an element of $\tilde{O}$. If $(P,Q) \in \tilde{O}\cap \sigma_{(i,j)}^{-1}(\sigma_{(i,j)}(M,N))$, then we have $Q=N$ and $P$ is a direct summand of $N|_B$. Consequently, the Theorem of Krull-Remak-Schmidt provides a finite subset $\cF \subseteq \modd_B^d$ such that
\[ \tilde{O}\cap \sigma_{(i,j)}^{-1}(\sigma_{(i,j)}(M,N)) \subseteq (O_i\cap {\rm GL}_d(k)\dact\cF)\times \{N\}.\]
Thanks to property ($\dagger$) the right-hand set is finite. Since $x \in \tilde{O}$, the set $\tilde{O}\cap \sigma_{(i,j)}^{-1}(\sigma_{(i,j)}(x))$ intersects every irreducible component $X$ of
$\sigma_{(i,j)}^{-1}(\sigma_{(i,j)}(x))$ containing $x$ non-trivially. Let $\cA$ be the set of these components. Consequently,
\[ \dim_x\sigma_{(i,j)}^{-1}(\sigma_{(i,j)}(x)) = \max_{X \in \cA} \dim X = \max_{X \in A} \dim X\cap \tilde{O} = 0,\]
which contradicts the fact that $x \in O$. \ \ \ \ $\diamond$

\medskip

(d) {\it Let $\varphi : V \lra W$ be a dominant morphism of one-dimensional irreducible varieties. Then $\varphi$ is open}.

\smallskip
\noindent
Let $\emptyset \neq O \subseteq V$ be an open set. Then $\varphi(O)$ is a constructible subset of a one-dimensional irreducible variety and thus by Lemma \ref{GD2} finite or cofinite. In the former case, we have
\[ W = \overline{\varphi(V)} =   \overline{\varphi(\overline{O})} \subseteq \overline{\varphi(O)} = \varphi(O),\]
a contradiction. Thus, $\varphi(O)$ is cofinite and in particular open.  \ \ \ \ $\diamond$

\medskip
(e) {\it Given $j \in \{1, \ldots, \ell(d)\}$, the set
\[ \Gamma_j := \{ i \in \{1, \ldots, \mu_B(d)\} \ ; \  V_i =  \overline{\pi_{(i,j)}(\cS(i,j))}\}\]
has cardinality $|\Gamma_j| \le \dim_kA$}.

\smallskip
\noindent
Let $r \in \Gamma_j$. By assumption, there exists an  irreducible component $Z_r \subseteq \overline{\cS(r,j)}$ such that the morphism $\pi_{(r,j)} : Z_r \lra V_r$ is dominant. Thanks to (c), we also have a dominant morphism $\sigma_{(r,j)} : Z_r \lra W_j$.

In view of Lemma \ref{GD3}(2), $\ind_B^d\cap V_r$ is a dense, open subset of $V_r$, so that $\emptyset \ne \pi_{(r,j)}^{-1}( {\rm ind}_B^d\cap V_r) \subseteq Z_r$ enjoys the same property. By constructibility of $\cS(r,j)$, there exists a dense open subset $\tilde{U}_r \subseteq \cS(r,j)$ of $\overline{\cS(r,j)}$. As $Z_r$ is a component of $\overline{\cS(r,j)}$, this set intersects $Z_r$
non-trivially. Put $U_r := V_r \cap \ind_B^d\!\smallsetminus\! (\bigcup_{\ell \ne r} {\rm GL}_d(k)\dact V_\ell)$. In view of (d) and Lemma \ref{GD3}(3),
\[ O_r := \sigma_{(r,j)}(\tilde{U}_r \cap \pi_{(r,j)}^{-1}(U_r)) \subseteq \sigma_{(r,j)}(\cS(r,j))\]
is a dense, open subset of $W_j$. Thus, $\tilde{O} := \bigcap_{r \in \Gamma_j}O_r$ also has these properties.

We fix an element $s \in \Gamma_j$. Since $\sigma_{(s,j)} : Z_s \lra W_j$ is dominant, we have $\sigma_{(s,j)}^{-1}(\tilde{O}) \neq \emptyset$. As $\tilde{U}_s$ meets $Z_s$, it follows that $U'_s:=\tilde{U}_s\cap \sigma^{-1}_{(s,j)}(\tilde{O}) \subseteq \cS(s,j)$ is a dense, open subset of the irreducible variety $Z_s$. In view of (a) and (d), $O'_s:= \pi_{(s,j)}(U'_s)$ is a dense open subset of $V_s$.

Thanks to Lemma \ref{GD3}(2), $V_s \cap \ind_B^d$ is a dense open subset of $V_s$. Its intersection with $O'_s$ yields a dense open subset $O''_s \subseteq O'_s \cap {\rm ind}_B^d$ of $V_s$. In particular, we have $V_s = O''_s \cup \cF_s$ for some finite subset $\cF_s \subseteq V_s$.  

Let $M$ be an element of $O''_s$. Then $M$ is indecomposable and $M = \pi_{(s,j)}(M,N)$ with $(M,N) \in U'_s$. Thus, $(M,N) \in \cS(s,j)$ and $N = \sigma_{(s,j)}(M,N) \in O_r \subseteq \sigma_{(r,j)}(\cS(r,j))$ for every $r \in \Gamma_j$. Hence there exists, for every $r \in \Gamma_j$, a $B$-module $M_r \in  U_r$ such that $(M_r,N) \in \cS(r,j)$.

The $B$-modules $(M_r)_{r \in \Gamma_j}$ are indecomposable direct summands of $N|_B$. As each $M_r$ belongs to $U_r$, the $M_r$ are pairwise non-isomorphic. By the
Theorem of Krull-Remak-Schmidt, the module $\bigoplus_{r \in \Gamma_j} M_r$ is also a direct summand of $N|_B$. Since $\dim_k N \le \dim_k A\!\otimes_B\!M_s \le (\dim_kA)d$, we obtain
\[ d |\Gamma_j| \le \dim_k N \le d (\dim_kA),\]
so that $|\Gamma_j| \le \dim_kA$.  \ \ \ \ $\diamond$

\medskip
\noindent
Let $d>0$. Since each $V_i$ is irreducible, claim (b) provides a map $\omega : \{1, \ldots, \mu_B(d)\} \lra \{1, \ldots, \ell(d)\}$ such that $V_i = \overline{\pi_{(i,\omega(i))}(\cS(i,\omega(i))}$. We set $\Omega := \im\,\omega$. There exists a finite set $F'_d  \subseteq \modd_B^d$ such that
\[ \ind_B^d \subseteq \bigcup_{t \in \Omega}\bigcup_{i \in \omega^{-1}(t)}\GL_d(k)\dact V_i \cup \GL_d(k)\dact F'_d \subseteq
\bigcup_{t \in \Omega}\bigcup_{i \in \Gamma_t}\GL_d(k)\dact V_i \cup \GL_d(k)\dact F'_d.\]
Thanks to (e) and Lemma \ref{SpE1}, there exists $c>0$ such that the number $\mu_B(d)$ of one-parameter families is bounded by
\[ | \Omega | ( \max_{j \in \Omega} | \Gamma_j |)  \le \ell(d) (\dim_kA) \le c(\dim_kA)d^{\gamma_A}.\]
Consequently, the algebra $B$ has polynomial growth of rate $\gamma_B \le \gamma_A\!+\!1$. \end{proof}

\bigskip
\noindent
The values of $\gamma_A$ and $\gamma_B$ may be arbitrarily far apart. Let $A$ be a tame algebra. Then $A\!:\!k$ is a split extension with $\gamma_k=0$. 

The upper bound for $\gamma_B$ depends on the knowledge of the indecomposable constituents of $A\!\otimes_B\!V$ containing $V$ as a direct summand. The following result shows
how a better understanding of induced modules leads to improved estimates.

\bigskip

\begin{Corollary} \label{SpE4} Suppose that $A\!:\!B$ is an extension of $k$-algebras such that $A = \bigoplus_{i=1}^r A_i$ is a direct sum of $(B,B)$-modules with
$A_i \cong B^{(\id_B\otimes \zeta_i)}$ for some $\zeta_i \in {\rm Aut}(B)$. If $A$ is of polynomial growth, then $B$ is of polynomial growth with $\gamma_B \le \gamma_A$. \end{Corollary}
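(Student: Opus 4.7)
The plan is to adapt the proof of Theorem~\ref{SpE3} by replacing Lemma~\ref{SpE1} with Corollary~\ref{SpE2}. The hypothesis that $A = \bigoplus_{i=1}^r A_i$ with $A_i \cong B^{(\id_B \otimes \zeta_i)}$ yields $(A\!\otimes_B\!V)|_B \cong \bigoplus_{i=1}^r V^{(\zeta_i)}$ for every $V \in \modd_B$, so the indecomposable summands of $A\!\otimes_B\!V$ have dimension bounded by $rd$ rather than $(\dim_k A)d$. This is precisely what Corollary~\ref{SpE2} exploits to produce parametrizing bimodules $X_1,\dots,X_{\ell(d)}$ with $\gamma_\ell \le \gamma_A$ instead of $\gamma_A\!+\!1$.

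Before replaying the geometric argument, I would take $\ell(d)$ to be minimal subject to conditions (a) and (b) of Corollary~\ref{SpE2} and check that the finiteness property analogous to Lemma~\ref{SpE1}(3), namely $|\{X_j\!\otimes_{k[T]}\!k_\lambda \ ; \ \lambda \in k\} \cap \GL_{r_j}(k)\dact x| < \infty$ for all $x \in \modd^{r_j}_A$, still holds. The argument is identical to Lemma~\ref{SpE1}(3): a cofinite intersection would allow Krull--Remak--Schmidt to absorb the $j$-th family into finitely many orbits together with the remaining $X_q$, contradicting minimality of $\ell(d)$.

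With this in place, steps (a)--(e) of the proof of Theorem~\ref{SpE3} carry over verbatim, since their geometric content depends only on the parametrizing data, properties (a) and (b), and the finiteness just verified. Step (e) retains the uniform bound $|\Gamma_j| \le \dim_k A$, and the final estimate of the proof of Theorem~\ref{SpE3} becomes
\[
\mu_B(d) \;\le\; |\Omega|\cdot \max_{j\in\Omega}|\Gamma_j| \;\le\; (\dim_k A)\,\ell(d).
\]
Since $\gamma_\ell \le \gamma_A$ by Corollary~\ref{SpE2}, the constant factor $\dim_k A$ is absorbed into the growth rate, yielding $\gamma_B \le \gamma_A$.

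The main obstacle is purely bookkeeping around Corollary~\ref{SpE2}: its statement omits the analogue of Lemma~\ref{SpE1}(3), so one must confirm that this finiteness property is preserved after passing to minimal $\ell(d)$. Once this is settled, no new geometric input is required; the improvement over Theorem~\ref{SpE3} (gaining one unit in the growth rate) is entirely a consequence of the sharper estimate on $\ell$ made possible by the twist hypothesis on the bimodule decomposition of $A$.
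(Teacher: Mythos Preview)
Your proposal is correct and follows essentially the same approach as the paper, which simply instructs the reader to use Corollary~\ref{SpE2} in place of Lemma~\ref{SpE1} in the final paragraph of the proof of Theorem~\ref{SpE3}. Your additional remark that the analogue of Lemma~\ref{SpE1}(3) must be verified for the minimal $\ell(d)$ of Corollary~\ref{SpE2} is a legitimate bookkeeping point that the paper glosses over, and your justification for it is sound.
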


\begin{proof} Using Corollary \ref{SpE2} in the last paragraph of the above proof, we arrive at the stronger estimate. \end{proof}

\bigskip

\begin{Examples}  (1) The classical example of a split extension is the trivial extension $A := B\oplus M$ of $B$ by a $(B,B)$-bimodule $M$. In that case, $B$ is also a factor algebra
of $A$ and our prefatory remarks show that $B$ inherits polynomial growth from $A$ along with $\gamma_B \le \gamma_A$.

(2) \ Let $G$ be a finite group, $H \subseteq G$ be a subgroup, $S \subseteq G$ be a set of right coset representatives of $H$ in $G$ containing $1$. Then
\[ kG = kH \oplus (\bigoplus_{s \ne 1} kHs)\] 
is a direct sum of $(kH,kH)$-bimodules, so that the extension $kG\!:\!kH$ is split.

If $H \unlhd G$ is normal in $G$, then
\[ kG = \bigoplus_{s \in S} kHs\]
is a decomposition of $kH$-bimodules, with each summand being isomorphic to $kH^{(\id_{kH}\otimes s\dact^{-1})}$, where $s\dact^{-1}$ denotes the conjugation by $s^{-1}$. Suppose that $kG$ has polynomial growth. It now follows from Corollary \ref{SpE4} that $kH$ has polynomial growth with growth rate $\gamma_{kH} \le \gamma_{kG}$.

(3) \ Suppose that $\Char(k)=p>0$. The following example shows that extensions defined by ``group algebras'' of infinitesimal group schemes tend to behave differently. Let $\fb \subseteq \fsl(2)$ be the Borel subalgebra of upper triangular $(2\!\times\!2)$-matrices of the restricted Lie algebra $\fsl(2)$. The extension $U_0(\fsl(2))\!:\!U_0(\fb)$, given by the associated restricted
enveloping algebras, is not split: Let $\lambda \in \fb^\ast$ be the Steinberg weight, so that $U_0(\fsl(2))\!\otimes_{U_0(\fb)}\!k_\lambda$ is the Steinberg module. This module is known to be projective, so that its restriction $(U_0(\fsl(2))\!\otimes_{U_0(\fb)}\!k_\lambda)|_{U_0(\fb)}$ is also projective. If $U_0(\fsl(2))\!:\!U_0(\fb)$ was split, then $k_\lambda$ would be a direct summand of $(U_0(\fsl(2))\!\otimes_{U_0(\fb)}\!k_\lambda)|_{U_0(\fb)}$ and hence also projective. Since $\fb$ is not a torus, this is impossible. Note that $\gamma_{U_0(\fsl(2))}=1$, while $\gamma_{U_0(\fb)}=0$. \end{Examples}

\bigskip

\section{Separable Extensions}
In this section we show that polynomial growth is preserved under ascent via separable extensions.

\bigskip

\begin{Lemma} \label{SE1} Let $A\!:\!B$ be a separable extension, and suppose that $B$ is tame of polynomial growth. Then there exists a function $\ell : \NN \lra \NN_0$ with the following properties:
\begin{enumerate}
\item For every $d>0$ there exist a finite subset $F_d \subseteq \ind^d_A$ and $(B,k[T])$-bimodules $Y_1,\ldots,Y_{\ell(d)}$ that are free $k[T]$-modules
of rank $\le d$ such that for every $M \in \ind^d_A\!\smallsetminus\! \GL_d(k)\dact F_d$ there exist $i \in \{1,\ldots, \ell(d)\}$ and $\lambda : k[T] \lra k$ with
\begin{enumerate}
\item[(a)] $M$ is isomorphic to a direct summand of $(A\!\otimes_B\!Y_i)\!\otimes_{k[T]}\!k_\lambda$, and
\item[(b)] $Y_i\!\otimes_{k[T]}\!k_\lambda$ is isomorphic to a direct summand of $M|_B$. \end{enumerate}
\item The function $\ell$ has polynomial growth of rate $\gamma_\ell \le \gamma_B\!+\!1$. 
\item If $\ell(d)$ is minimal subject to (a) and (b), then 
\[ |\{Y_i\!\otimes_{k[T]}\!k_\lambda \ ; \lambda \in k\} \cap \GL_{r_i}(k)\dact x| < \infty\]
for all $i \in \{1,\ldots,\ell(d)\}$, $x \in \modd^{r_i}_B,\ r_i := \rk_{k[T]}(Y_i)$.\end{enumerate} \end{Lemma}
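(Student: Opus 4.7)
The plan is to mirror the strategy of Lemma~\ref{SpE1}, exchanging the roles of induction and restriction. The essential input from separability is that for every $A$-module $M$, the canonical multiplication $A\!\otimes_B\!M|_B \tha M$, $a\!\otimes\!m\mapsto am$, splits as a morphism of $A$-modules: tensoring the bimodule section $\sigma:A\lra A\!\otimes_B\!A$ of $\mu$ with $M$ over $A$ produces such a splitting. Consequently, every $A$-module $M$ is isomorphic to a direct summand of the induced $A$-module $A\!\otimes_B\!M|_B$.

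For (1) and (2), fix $d>0$. Since $B$ is tame of polynomial growth, the Remark following the definition of tameness supplies, for each $t\in\{1,\ldots,d\}$, $(B,k[T])$-bimodules $N_1^t,\ldots,N_{\mu_B(t)}^t$ that are free of rank $t$ over $k[T]$, together with a finite set $G_t\subseteq\ind_B^t$, such that every $V\in\ind_B^t\smallsetminus \GL_t(k)\dact G_t$ is isomorphic to $N_j^t\!\otimes_{k[T]}\!k_\lambda$ for suitable $j$ and $\lambda:k[T]\lra k$. Put $G_{(d)}:=\bigcup_{t=1}^d G_t$ and let $F_d\subseteq\ind_A^d$ be a (finite) system of $\GL_d(k)$-orbit representatives of the $d$-dimensional indecomposable $A$-summands of $\bigoplus_{V\in G_{(d)}} A\!\otimes_B\!V$; finiteness follows from Krull-Remak-Schmidt. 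For $M\in\ind_A^d\smallsetminus \GL_d(k)\dact F_d$, I decompose $M|_B\cong V_1\oplus\cdots\oplus V_q$ into $B$-indecomposables. Separability places $M$ as a summand of $\bigoplus_i A\!\otimes_B\!V_i$, hence, by Krull-Remak-Schmidt, of $A\!\otimes_B\!V_i$ for some $i$. The choice of $F_d$ then forces $V_i\notin\GL_{\dim V_i}(k)\dact G_{\dim V_i}$, so $V_i\cong N_j^t\!\otimes_{k[T]}\!k_\lambda$ for some $t=\dim V_i\le d$. Enumerating all $N_j^t$ with $1\le t\le d$, $1\le j\le\mu_B(t)$ as $Y_1,\ldots,Y_{\ell(d)}$ verifies (a) and (b) with $r_i\le d$. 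The estimate
\[ \ell(d)\le \sum_{t=1}^d\mu_B(t)\le c'\sum_{t=1}^d t^{\gamma_B-1}\le c'd^{\gamma_B} \]
yields $\gamma_\ell\le \gamma_B+1$, establishing (2).

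Part (3) I would argue by contradiction, paralleling the final step of Lemma~\ref{SpE1}. Assume $\ell(d)$ is minimal and that for some $i$ the constructible set $\cV_i:=\{Y_i\!\otimes_{k[T]}\!k_\lambda\,;\,\lambda\in k\}\cap \GL_{r_i}(k)\dact x_0$ is cofinite in the one-parameter family $\{Y_i\!\otimes_{k[T]}\!k_\lambda\,;\,\lambda\in k\}$, with finite complement $H_i$; then the whole family is contained in $\GL_{r_i}(k)\dact (H_i\cup\{x_0\})$. Fix decompositions of $A\!\otimes_B\!N$ into $A$-indecomposables for every $N\in H_i\cup\{x_0\}$ and collect the $d$-dimensional constituents into a finite set $\cF_d\subseteq\ind_A^d$. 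For $M\in\ind_A^d\smallsetminus\GL_d(k)\dact(F_d\cup\cF_d)$, properties (a) and (b) produce $q\in\{1,\ldots,\ell(d)\}$ and $\lambda$ with $M$ a summand of $(A\!\otimes_B\!Y_q)\!\otimes_{k[T]}\!k_\lambda$. The case $q=i$ forces $Y_i\!\otimes_{k[T]}\!k_\lambda\in\GL_{r_i}(k)\dact(H_i\cup\{x_0\})$, whence $M\in\GL_d(k)\dact\cF_d$, a contradiction. Hence $q\ne i$, contradicting the minimality of $\ell(d)$. The main obstacle is purely organizational: keeping the interplay between the $B$-side Krull-Remak-Schmidt decomposition of $M|_B$ and the reverse $A$-side decomposition of the inductions $A\!\otimes_B\!V_i$ transparent, so that the pigeonhole argument used in Lemma~\ref{SpE1}(3) transfers cleanly under the interchange of induction and restriction.
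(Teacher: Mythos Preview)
Your proposal is correct and follows essentially the same approach as the paper's own proof: both use separability to realize $M$ as a direct summand of $A\!\otimes_B\!M|_B$, pick out an indecomposable $B$-summand $N$ of $M|_B$ with $M\mid A\!\otimes_B\!N$, collect the parametrizing $(B,k[T])$-bimodules of rank $\le d$ as $Y_1,\ldots,Y_{\ell(d)}$, and define $F_d$ via the $d$-dimensional $A$-summands of the inductions of the exceptional $B$-modules. Your growth estimate and the contradiction argument for part (3) via Lemma~\ref{GD2} and the enlarged exceptional set $F_d\cup\cF_d$ are likewise identical to the paper's.
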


\begin{proof} (1) Given $j>0$, we let $X_1^j,\ldots, X^j_{\mu_B(j)}$ be the parametrizing $(B,k[T])$-modules and $G_j \subseteq \ind^j_B$ be the finite set of exceptional modules.

Let $d>0$, $M \in \ind^d_A$. Since the extension $A\!:\!B$ is separable, we have $M\!\mid\!A\!\otimes_B\!M$, and there exists an indecomposable direct summand $N$ of $M|_B$ such that $M\!\mid\!A\!\otimes_B\!N$. Hence $N \in \ind^j_B$ for some $j \in \{1,\ldots, d\}$. 

We consider the $(B,k[T])$-bimodules $\{X^j_i \  ;  \ 1\le j \le d, \  1\le i \le \mu_B(j)\}$. Let $\ell(d)$ be the number of these modules, which we denote $Y_1,\dots, Y_{\ell(d)}$. By definition, each $Y_i$ is a free $k[T]$-module of rank $\rk_{k[T]}(Y_i) \le d$. 

Consider the finite set
\[ U_d := \{ A\!\otimes_B\! N \ ; \ N \in \bigcup_{j=1}^d G_j\}.\]
For each $X \in U_d$ we pick a decomposition into indecomposable constituents. Let $\cU_d$ be the finite set of these constituents and put $F_d := \ind^d_A\cap \cU_d$.

Let $M \in \ind^d_A\!\smallsetminus\! \GL_d(k)\dact F_d$. By the above, there exist $j \in \{1,\ldots, d\}$ and an indecomposable direct summand $N\!\mid\!(M|_B)$
with $\dim_kN=j$, such that $M$ is isomorphic to a direct summand of $A\!\otimes_B\!N$. The assumption $N \in \GL_j(k)\dact G_j$ implies $M \in \GL_d(k)\dact F_d$, a contradiction. It follows that $N \cong Y_i\!\otimes_{k[T]}\!k_\lambda$ for some $i \in \{ 1, \ldots, \ell(d)\}$. 

(2) Since $B$ has polynomial growth, there exists $c>0$ such that
\[ \ell(d) = \sum_{j=1}^d\mu_{B}(j) \le c \sum_{j=1}^dj^{\gamma_B-1} \le c d d^{\gamma_B-1} \le cd^{\gamma_B}.\]
As a result, we have $\gamma_\ell \le \gamma_B\!+\!1$.  

(3) Suppose there is $x_0 \in \modd^{r_i}_B$ for some $i \in \{1,\ldots, \ell(d)\}$ such that the set $\cV_i:=\{Y_i\!\otimes_{k[T]}\!k_\lambda \ ; \lambda \in k\} \cap \GL_{r_i}(k)\dact x_0$ is infinite. Lemma \ref{GD2} then shows that $\cV_i$ is cofinite in $\{Y_i\!\otimes_{k[T]}\!k_\lambda \ ; \lambda \in k\}$. We write 
\[ \{Y_i\!\otimes_{k[T]}\!k_\lambda \ ; \lambda \in k\}  = \cV_i \cup G_{r_i}\]
for some finite set $G_{r_i} \subseteq \modd^{r_i}_B$. For every $N \in G_{r_i}\cup\{x_0\}$, we fix a decomposition of $A\!\otimes_B\!N$ into indecomposables and consider the finite subset $\cF_d\subseteq \ind^d_A$ of $d$-dimensional indecomposable constituents.

Let $M$ be an element of $\ind^d_A\!\smallsetminus\! \GL_d(k)\dact (F_d\cup\cF_d)$. Since $M\not \in \GL_d(k)\dact F_d$, $M$ is a direct summand of some $(A\!\otimes_B\!Y_j)\!\otimes_{k[T]}\!k_\lambda$, while $M \not \in \GL_d(k)\dact \cF_d$ ensures that $j\ne i$. This, however, contradicts the minimality of $\ell(d)$. \end{proof}

\bigskip
\noindent
Given a finite group $G$ that acts on a $k$-algebra $B$ via automorphisms, we denote by $B\!\ast\! G$ the {\it skew group algebra}, cf.\ \cite[(III.4)]{ARS}. For separable extensions of the form $B\!\ast\!G\!:\!B$, the estimate for the growth of the function $\ell$ may be strengthened. 

In the following, we denote by $\lfloor q\rfloor$ the least integer greater than or equal to $q \in \QQ$. 

\bigskip

\begin{Lemma} \label{SE2} Let $G$ be a finite group acting on a $k$-algebra $B$ via automorphismsm. Suppose that $B$ has polynomial growth, and that $p\nmid {\rm ord}(G)$.
Then there exists a function $\ell : \NN \lra \NN_0$ with the following properties:
\begin{enumerate}
\item For every $d>0$ there exist a finite subset $F_d \subseteq \ind^d_{B\ast G}$ and $(B,k[T])$-bimodules $Y_1,\ldots,Y_{\ell(d)}$ that are free $k[T]$-modules
of rank $\le d$ such that for every $M \in \ind^d_{B\ast G}\!\smallsetminus\! \GL_d(k)\dact F_d$ there exist $i \in \{1,\ldots, \ell(d)\}$ and $\lambda : k[T] \lra k$ such that
\begin{enumerate}
\item[(a)] $M$ is isomorphic to a direct summand of $(B\!\ast\!G\!\otimes_B\!Y_i)\!\otimes_{k[T]}\!k_\lambda$, and
\item[(b)] $Y_i\!\otimes_{k[T]}\!k_\lambda$ is isomorphic to a direct summand of $M|_B$. \end{enumerate}
\item The function $\ell$ has polynomial growth of rate $\gamma_\ell \le \gamma_B$. 
\item If $\ell(d)$ is minimal subject to (a) and (b), then 
\[ |\{Y_i\!\otimes_{k[T]}\!k_\lambda \ ; \lambda \in k\} \cap \GL_{r_i}(k)\dact x| < \infty\]
for all $i \in \{1,\ldots,\ell(d)\}$, $x \in \modd^{r_i}_B,\ r_i := \rk_{k[T]}(Y_i)$.\end{enumerate} \end{Lemma}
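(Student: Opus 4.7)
The plan is to follow the strategy of Lemma~\ref{SE1}, exploiting the additional Mackey-type decomposition that skew group algebras afford in order to restrict the range of $B$-module dimensions appearing in the parametrizing list, thereby shaving one degree off the growth bound.

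First, under the hypothesis $p \nmid \mathrm{ord}(G)$, the element $e = \frac{1}{|G|}\sum_{g \in G} g\otimes g^{-1}$ of $(B\ast G)\otimes_B(B\ast G)$ serves as a separability idempotent, so $B\ast G : B$ is separable. Therefore Lemma~\ref{SE1} applies: for each $M \in \ind^d_{B\ast G}$ there exists an indecomposable summand $N \mid M|_B$ with $M \mid (B\ast G)\otimes_B N$, so $N \in \ind^j_B$ for some $j$.

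Second, I would sharpen the range of $j$ using the decomposition
$$ \bigl((B\ast G)\otimes_B N\bigr)\big|_B \;\cong\; \bigoplus_{g \in G} N^{(g)},$$
which reflects the decomposition $B\ast G \cong \bigoplus_{g \in G} Bg$ as $(B,B)$-bimodules with each $Bg$ isomorphic to a twist $B^{(\id_B \otimes \zeta_g)}$ for the conjugation automorphism $\zeta_g$. Since each $N^{(g)}$ is indecomposable of dimension $j$, the Theorem of Krull-Remak-Schmidt forces every indecomposable $B$-summand of $M|_B$ to be some $N^{(g)}$, hence of dimension $j$; moreover the total number $t$ of such summands (with multiplicity) satisfies $t \le |G|$, because $M|_B$ embeds as a direct summand of $\bigoplus_{g\in G} N^{(g)}$. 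Hence $d = jt$ with $1 \le t \le |G|$, so $j$ takes at most $|G|$ possible values, lying in $\{d/t : 1 \le t \le |G|,\ t\mid d\}$.

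Third, I would construct $Y_1, \ldots, Y_{\ell(d)}$ by collecting, for each such $j$, the $\mu_B(j)$ parametrizing $(B,k[T])$-bimodules furnished by the polynomial growth of $B$; the finite exceptional set $F_d$ is obtained, as in Lemma~\ref{SE1}, by assembling the $d$-dimensional $B\ast G$-indecomposable constituents of $(B\ast G)\otimes_B N$ as $N$ ranges over the finite exceptional sets of $B$-modules of dimensions $d, d/2, \ldots, d/|G|$. Properties (a) and (b) are then immediate from the argument of Lemma~\ref{SE1}(1). For the growth rate, polynomial growth of $B$ yields $c > 0$ with $\mu_B(n) \le c\,n^{\gamma_B-1}$, and summing,
$$ \ell(d) \;\le\; \sum_{t=1}^{|G|}\mu_B\!\bigl(\lceil d/t\rceil\bigr) \;\le\; c\sum_{t=1}^{|G|}\bigl(d/t+1\bigr)^{\gamma_B-1} \;\le\; c'\,|G|\,d^{\gamma_B-1},$$
which proves $\gamma_\ell \le \gamma_B$. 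Assertion (3) is established verbatim as in Lemma~\ref{SE1}(3): a violation would, via Lemma~\ref{GD2}, make some $Y_i$-parametrized family cofinite in a single $\GL_{r_i}(k)$-orbit, allowing $Y_i$ to be absorbed into an enlarged exceptional set and contradicting the minimality of $\ell(d)$.

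The principal technical input is the bound $t \le |G|$ on the number of indecomposable $B$-summands of $M|_B$; this combines separability (which produces $M \mid (B\ast G)\otimes_B N$) with the explicit $(B,B)$-bimodule decomposition of $B\ast G$, and is precisely what distinguishes the skew group algebra case from the general separable extension treated in Lemma~\ref{SE1}.
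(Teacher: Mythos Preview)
Your proposal is correct and follows essentially the same approach as the paper: separability of $B\ast G:B$, the decomposition $((B\ast G)\otimes_B N)|_B \cong \bigoplus_{g\in G} N^{(g)}$ forcing $\dim_k N$ to be $d/t$ for some $1\le t\le |G|$, and then summing $\mu_B$ over these at most $|G|$ values to obtain $\gamma_\ell \le \gamma_B$, with part (3) inherited verbatim from Lemma~\ref{SE1}(3). The only cosmetic difference is that the paper writes the relevant dimensions as $\lfloor d/r\rfloor$ (using its convention that $\lfloor\cdot\rfloor$ denotes the ceiling) rather than restricting to divisors of $d$ as you do; either way the sum has at most $|G|$ terms, each bounded by $c\,d^{\gamma_B-1}$.
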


\begin{proof} We put $A := B\!\ast\! G$ for ease of notation. Since $p$ does not divide ${\rm ord}(G)$, the extension $A\!:\!B$ is separable (cf.\ Proposition \ref{SP1} below).

Given $j>0$, we let $X_1^j,\ldots, X^j_{\mu_B(j)}$ be parametrizing $(B,k[T])$-bimodules, $G_j \subseteq \ind^j_B$ be the finite set of exceptional modules.

Let $d>0$, $M \in \ind^d_A$. Then there exists an indecomposable direct summand $N$ of $M|_B$ such that $M$ is a direct summand of $A\!\otimes_B\!N$.
Since $(A\!\otimes_B\!N)|_B \cong \bigoplus_{g \in G}N^{(g)}$, there exists a subset $S_M \subseteq G$ with $M|_B \cong  \bigoplus_{g \in S_M}N^{(g)}$.
As a result $\dim_kM = |S_M| \dim_k N$, and we conclude that $N \in \ind^j_B$, where $d=rj$ and $r\le {\rm ord}(G)=:|G|$. We consider the $(A,k[T])$-bimodules 
\[ A\!\otimes_B\!X^{\lfloor d/r\rfloor}_i \ \ \ ; \ \ \ 1\le r \le |G| \ , \ 1\le i \le \mu_B(\lfloor d/r\rfloor). \]
Let $\ell(d)$ be the number of these modules. Since $B$ has polynomial growth, there exists $c>0$ such that
\[ \ell(d) = \sum_{r=1}^{|G|}\mu_B(\lfloor d/r\rfloor) \le c \sum_{r=1}^{|G|}\lfloor d/r\rfloor^{\gamma_B-1} \le c |G| d^{\gamma_B-1}.\]
As a result, we have $\gamma_\ell \le \gamma_B$.  The remaining assertions follow as in Lemma \ref{SE1}.\end{proof}

\bigskip

\begin{Theorem} \label{SE3} Let $A\!:\!B$ be a separable extension. If $B$ is of polynomial growth, then $A$ has polynomial growth with 
$\gamma_A \le \gamma_B\!+\!1$. \end{Theorem}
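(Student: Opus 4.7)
The plan is to mirror the proof of Theorem \ref{SpE3}, interchanging the roles of the two algebras. In place of the split-extension pairing ``$V\mid N|_B$ and $N\mid A\!\otimes_B\!V$'', I shall use the pairing furnished by separability: for every indecomposable $A$-module $M$ there is an indecomposable summand $N$ of $M|_B$ with $M\mid A\!\otimes_B\!N$. The role of Lemma \ref{SpE1} will be played by Lemma \ref{SE1}, which for each $d>0$ produces $(B,k[T])$-bimodules $Y_1,\ldots,Y_{\ell(d)}$ of $k[T]$-rank $r_j\le d$ satisfying $\gamma_\ell\le\gamma_B\!+\!1$ and the finiteness property \ref{SE1}(3); throughout, I take $\ell(d)$ to be minimal subject to properties (a) and (b) of Lemma \ref{SE1}.

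First I need tameness of $A$. By separability, every indecomposable $A$-module is a direct summand of some $A\!\otimes_B\!N$ with $N$ indecomposable; combining this with polynomial growth of $B$, the generic decomposition of each induced family $(A\!\otimes_B\!Y_i)\!\otimes_{k[T]}\!k_\lambda$ yields finitely many one-parameter families of $d$-dimensional indecomposable $A$-modules, whence Proposition \ref{GD1} gives tameness of $A$. Let $f_i:\A^1\lra\modd_A^d$ $(1\le i\le\mu_A(d))$ be the parametrizing morphisms supplied by Proposition \ref{GD4} and set $V_i:=\overline{\im f_i}$; then Lemma \ref{GD3}(1) supplies $|\im f_i\cap\GL_d(k)\dact x|<\infty$ for every $x\in\modd_A^d$. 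Let $g_j:\A^1\lra\modd_B^{r_j}$ be the morphism attached to $Y_j$ and put $W_j:=\overline{\im g_j}$.

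The geometric core of the argument is to introduce, for each pair $(i,j)$, the constructible set $\cS(i,j)\subseteq\im f_i\times\im g_j$ of pairs $(M,N)$ with $N\mid M|_B$ and $M\mid A\!\otimes_B\!N$, together with the projections $\pi_{(i,j)}:\overline{\cS(i,j)}\lra V_i$ and $\sigma_{(i,j)}:\overline{\cS(i,j)}\lra W_j$. The four preparatory claims from the proof of Theorem \ref{SpE3} --- (a) $\dim\overline{\cS(i,j)}\le 1$; (b) $V_i=\bigcup_j\overline{\pi_{(i,j)}(\cS(i,j))}$; (c) each irreducible component of $\overline{\cS(i,j)}$ dominating $V_i$ under $\pi_{(i,j)}$ also dominates $W_j$ under $\sigma_{(i,j)}$; and (d) dominant morphisms of one-dimensional irreducible varieties are open --- should transfer almost verbatim, provided I substitute Lemma \ref{SE1}(3) for Lemma \ref{SpE1}(3) in step (a) and replace ``direct summand of $N|_B$'' by ``direct summand of $A\!\otimes_B\!N$'' in the relevant Krull-Remak-Schmidt appeal.

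The decisive --- and, I expect, most delicate --- step is the analog of claim (e): the set $\Gamma_j:=\{i\mid V_i=\overline{\pi_{(i,j)}(\cS(i,j))}\}$ should satisfy $|\Gamma_j|\le\dim_kA$. Dualising the argument of Theorem \ref{SpE3}, I plan to combine the openness from (d) with the constructibility of $\cS(r,j)$ to locate a single $N\in W_j$ such that, for every $r\in\Gamma_j$, there is an indecomposable $M_r\in V_r\cap(\ind_A^d\smallsetminus\bigcup_{\ell\ne r}\GL_d(k)\dact V_\ell)$ with $(M_r,N)\in\cS(r,j)$, and in particular $M_r\mid A\!\otimes_B\!N$. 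The $M_r$ are then pairwise non-isomorphic indecomposable direct summands of $A\!\otimes_B\!N$, each of dimension $d$, so Krull-Remak-Schmidt yields
\[ d\,|\Gamma_j|\le\dim_k(A\!\otimes_B\!N)\le(\dim_kA)\,r_j\le(\dim_kA)\,d,\]
whence $|\Gamma_j|\le\dim_kA$. Feeding this estimate together with Lemma \ref{SE1}(2) into the closing bookkeeping of Theorem \ref{SpE3} gives $\mu_A(d)\le\ell(d)\dim_kA\le c(\dim_kA)d^{\gamma_B}$, and therefore $\gamma_A\le\gamma_B\!+\!1$, as claimed.
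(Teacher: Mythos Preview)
Your proposal is correct and follows essentially the same route as the paper's own proof: the paper likewise dualises Theorem \ref{SpE3} using Lemma \ref{SE1} in place of Lemma \ref{SpE1}, defines $\cS(i,j)$ via the pairing ``$M\mid A\!\otimes_B\!N$ and $N\mid M|_B$'', notes that claims (a)--(d) transfer verbatim, and in (e) obtains $d|\Gamma_j|\le\dim_k(A\!\otimes_B\!N)\le(\dim_kA)r_j\le(\dim_kA)d$ via the bound $r_j\le d$, exactly as you do. The only cosmetic difference is that the paper obtains tameness of $A$ by a direct citation of \cite[(4.2)]{dP2} rather than sketching the generic-decomposition argument.
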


\begin{proof} Since the extension $A\!:\!B$ is separable, every indecomposable $A$-module is a direct summand of  $A\!\otimes_B\!N$ for some indecomposable
$B$-module $N$. It thus follows from \cite[(4.2)]{dP2} that $A$ is tame. The arguments of Theorem \ref{SpE3} now apply mutatis mutandis.
We indicate the requisite changes.

Let $d>0$. There are parametrizing morphisms
\[ f_i : \A^1 \lra \modd_A^d \ \ \ \ ; \ \ \ \ 1 \le i \le \mu_A(d).\]
We set $V_i :=   \overline{{\rm im} \, f_i}$, and apply Proposition \ref{GD4} and Lemma \ref{GD3}(1) to obtain
\[ (\dagger) \ \ \ \ \ \ \ \ |{\rm im} f_i \cap \GL_d(k)\dact x| < \infty \ \ \ \ \ \forall \ x \in \modd^d_B.\]
Now let $Y_1, \ldots , Y_{\ell(d)}$, $F_d$ be the $(B,k[T])$-bimodules and the finite subset of $\ind_B^d$ provided by Lemma \ref{SE1}, respectively. For $j \in \{1, \ldots, \ell(d)\}$ let
\[ g_j : \A^1 \lra \modd_B^{r_j} \ \ ; \ \ \ r_j := \rk_{k[T]}(X_j)\le d\]
be the corresponding parametrizing morphisms. We put $W_j := \overline{{\rm im} \, g_j}$ and choose $\ell(d)$ to be minimal subject to properties (a) and (b) of Lemma \ref{SE1}.

Chevalley's Theorem provides dense open subsets $O_i \subseteq V_i$ and $U_j \subseteq W_j$ with $O_i \subseteq \im \, f_i$ and $U_j \subseteq \im\,  g_j$.

Given $(i,j) \in \{1,\ldots,\mu_B(d)\}\times\{1,\ldots,\ell(d)\}$, we denote by $\cS(i,j) \subseteq V_i\times W_j$ the set of all pairs $(M,N) \in \im f_i\times\im g_j$ such that $M$ is isomorphic to a direct summand of $A\otimes_B\!N$ and $N$ is isomorphic to a direct summand of $M|_B$. In view of Lemma \ref{GD5} and the arguments of \cite[p.183]{dP1}, this set is constructible. We denote by $\pi_{(i,j)} : \overline{\cS(i,j)} \lra V_i$ the restriction of the projection $V_i\times W_j\lra V_i$.

Assertions (a)-(d) of Theorem \ref{SpE3} can now be shown to hold in this context as well. In (e), we reach the conclusion that $\bigoplus_{r\in \Gamma_j}M_r$
is a direct summand of $A\!\otimes_B\!N$. Since $r_j \le d$ this implies
\[ d |\Gamma_j| \le \dim_k A\!\otimes_B\!N \le (\dim_kA)r_j \le (\dim_kA)d.\]
The remaining arguments of Theorem \ref{SpE3} may be adopted verbatim. \end{proof}

\bigskip

\section{Applications}
\subsection{Smash products} Our first application concerns the smash product of Hopf algebras. We refer the reader to \cite{Mo} for general facts concerning Hopf algebras. In the following,
we shall write $\eta$ and $\varepsilon$ for the antipode and co-unit of a Hopf algebra and use Sweedler notation $\Delta(h) = \sum_{(h)}h_{(1)}\!\otimes\!h_{(2)}$ for calculations involving
the comultiplication.

Let $H$ be a finite-dimensional Hopf algebra, $B$ be an $H$-module algebra. By definition, $B$ is an $H$-module with action $(h,b) \mapsto h\dact b$ such that
\begin{enumerate}
\item[(a)] $h\dact (ab) = \sum_{(h)}(h_{(1)}\dact a) (h_{(2)}\dact b)$ for all $h \in H, \, a,b \in B$, and
\item[(b)] $h\dact 1 = \varepsilon(h)1$ for all $h \in H$. \end{enumerate}
We consider the smash product $B \sharp H$, with underlying $k$-space $B\!\otimes_k\!H$ and multiplication
\[ (a\!\otimes\! h).(b\!\otimes\! h') = \sum_{(h)} a (h_{(1)}\dact b)\!\otimes\! h_{(2)}h' \ \ \ \ \forall \ h,h' \in H, \, a,b \in B.\]
Recall that the canonical maps $b \mapsto b\!\otimes\!1$ and $h \mapsto 1\!\otimes\! h$ are injective homomorphisms of algebras \cite[Chap.4]{Mo}. In this section we study the extension $B\sharp H\!:\!B$.

A Hopf algebra $H$ is referred to as {\it cosemisimple} if its dual algebra $H^\ast$ is semisimple. Recall that an extension $A\!:\!B$ of $k$-algebras is a {\it free Frobenius extension
of first kind}, provided 
\begin{enumerate}
\item[(a)] $A$ is a finitely generated free $B$-module, and
\item[(b)]  there exists an isomorphism $\varphi : A \lra \Hom_B(A,B)$ of $(A,B)$-bimodules. \end{enumerate}
In this case, the $B$-module $A$ is known to afford dual bases $\{x_1,\ldots,x_n\}$ and $\{y_1,\ldots,y_n\}$ such that $\varphi(x_i)(y_j) = \delta_{i,j}$. The map
\[ c_{A:B} : A \lra A \ \ ; \ \  b \mapsto \sum_{i=1}^n y_ibx_i,\]
the so-called {\it Gasch\"utz-Ikeda operator}, does not depend on the choice of the bases.

\bigskip

\begin{Prop} \label{SP1} Let $B$ be an $H$-module algebra. Then the following statements hold:
\begin{enumerate}
\item $B\sharp H\!:\!B$ is a free Frobenius extension of first kind.
\item If $H$ is cosemisimple, then the extension $B\sharp H\!:\!B$ is split. 
\item If $H$ is semisimple, then the extension $B\sharp H\!:\!B$ is separable.  \end{enumerate} \end{Prop}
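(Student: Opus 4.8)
The plan is to prove the three assertions about the extension $B\sharp H\!:\!B$ by exhibiting the relevant structural data explicitly, using the Hopf-algebraic machinery.

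\textbf{Part (1): Free Frobenius extension of first kind.} First I would show that $B\sharp H$ is a free $B$-module: picking a $k$-basis $\{h_1,\ldots,h_n\}$ of $H$, the elements $1\!\otimes\! h_1,\ldots,1\!\otimes\! h_n$ form a free basis of $B\sharp H$ as a left $B$-module, which follows directly from the definition of the multiplication together with the fact that $b\mapsto b\!\otimes\!1$ is an algebra embedding. For the bimodule isomorphism $\varphi: B\sharp H \lra \Hom_B(B\sharp H, B)$, the key input is that $H$ itself is a Frobenius algebra (every finite-dimensional Hopf algebra is, by a theorem of Larson--Sweedler), so there is a nonzero left integral $t \in H$ and a nonzero right integral $\lambda \in H^\ast$ giving a nondegenerate associative form on $H$. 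I would transport this structure to $B\sharp H$ by defining $\varphi(x)(y) := (\id_B\!\otimes\!\lambda)(xy) \in B$ for $x,y \in B\sharp H$, i.e.\ project the product onto the $B$-component using $\lambda$. One then checks $\varphi$ is $(B\sharp H, B)$-bilinear (associativity of multiplication and the fact that $\lambda$ is a trace-like functional) and that it is bijective because $\lambda$ is nondegenerate on $H$ and $B\sharp H = B\!\otimes_k\!H$ as a $B$-module. The dual bases $\{x_i\},\{y_i\}$ are obtained from the dual bases of $H$ with respect to the Frobenius form, pushed into $B\sharp H$ via $h\mapsto 1\!\otimes\!h$.

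\textbf{Part (2): Cosemisimple $\Rightarrow$ split.} Here cosemisimplicity of $H$ means $H^\ast$ is semisimple, equivalently there is a normalized left integral $t \in H$ with $\varepsilon(t)=1$ (by Maschke-type theory for Hopf algebras; $H$ cosemisimple iff such a normalized integral exists). I would use $t$ to build a $(B,B)$-bimodule projection $\rho: B\sharp H \lra B$ onto the copy $B\!\otimes\!1$. The natural candidate is $\rho(b\!\otimes\! h) := \varepsilon(h)\,b$ — one must check this is a $(B,B)$-bimodule map, where the left $B$-action is clear and the right $B$-action requires $b'\!\cdot\!(a\!\otimes\!h) = ab'\!\otimes\!h$... actually the right module structure on $B\sharp H$ twists things, so the correct averaging uses $t$: set $\rho(x) = $ the $B$-component of $x\cdot(1\!\otimes\!\eta(t_{(2)}))\cdots$, more precisely one averages the naive projection over the integral to kill the failure of bimodule-linearity. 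The clean statement: $\varepsilon$ restricted appropriately, composed with the integral, splits the inclusion $B \hookrightarrow B\sharp H$ of $(B,B)$-bimodules, so $B$ is a direct summand of $B\sharp H$ as a $(B,B)$-bimodule, which is exactly splitness.

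\textbf{Part (3): Semisimple $\Rightarrow$ separable.} When $H$ is semisimple, there is a normalized left integral $t\in H$ with $\varepsilon(t)=1$. The standard separability element is $e := \sum_{(t)} (1\!\otimes\! t_{(1)})\!\otimes_B\!(1\!\otimes\!\eta(t_{(2)})) \in (B\sharp H)\!\otimes_B\!(B\sharp H)$. I would verify (i) $\mu(e) = \sum_{(t)} (1\!\otimes\!t_{(1)})(1\!\otimes\!\eta(t_{(2)})) = 1\!\otimes\!\varepsilon(t)1 = 1$, using the antipode axiom $\sum t_{(1)}\eta(t_{(2)}) = \varepsilon(t)1$ and the multiplication rule in $B\sharp H$; and (ii) $x\cdot e = e\cdot x$ for all $x\in B\sharp H$, which reduces to checking it on generators $b\!\otimes\!1$ and $1\!\otimes\!h$ — the case of $B$ uses the module-algebra axioms and that $t$ is an integral so $H$-invariant expressions behave correctly, and the case of $H$ uses the integral property $ht = \varepsilon(h)t$ together with antipode identities. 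Then the map $s: B\sharp H \lra (B\sharp H)\!\otimes_B\!(B\sharp H)$, $x\mapsto x\cdot e = e\cdot x$, is an $(A,A)$-bimodule section of $\mu$, giving separability.

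\textbf{Main obstacle.} The routine points are the module-algebra and Frobenius-algebra bookkeeping; the delicate part is getting the twists right in Part (2), namely producing the $(B,B)$-bimodule retraction with the \emph{correct} handling of the right $B$-module structure on $B\sharp H$ (which is not simply $B\!\otimes\!H$ as a right $B$-module), and verifying that averaging the naive projection over the normalized integral $t$ repairs bilinearity. Equivalently one may phrase Part (2) via the Gasch\"utz--Ikeda operator $c_{B\sharp H:B}$ from Part (1): cosemisimplicity should make $c_{B\sharp H:B}$ an automorphism (or at least surjective onto a complement), and then a theorem relating the surjectivity of the Gasch\"utz--Ikeda operator to splitness of the extension finishes it — checking that hypothesis is where the real work lies.
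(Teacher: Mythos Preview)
Your Part (1) and Part (3) are sound; Part (1) is essentially the paper's argument, and Part (3) is the classical separability-idempotent route, which works and is a legitimate alternative to the paper's method (the paper instead computes $c_{B\sharp H:B}(1\!\otimes\!h)=1\!\otimes\!c_H(h)$ and invokes the Gasch\"utz--Ikeda/separability criterion directly).

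Part (2), however, has a genuine gap: you have the Maschke criterion on the wrong side. Cosemisimplicity of $H$ means $H^\ast$ is semisimple, which is equivalent to the existence of a normalized integral $\Lambda \in H^\ast$ with $\Lambda(1)=1$, \emph{not} to the existence of $t\in H$ with $\varepsilon(t)=1$ --- the latter characterizes semisimplicity of $H$ and is exactly what you (correctly) use in Part (3). So your proposed ``average over $t\in H$'' cannot be carried out from the hypothesis you have, and the Gasch\"utz--Ikeda alternative you mention at the end is a criterion for separability, not for splitness, so it does not rescue the argument either.

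There are two clean repairs. The first stays close to your idea: take $\Lambda\in H^\ast$ a left integral with $\Lambda(1)=1$ and set $\rho(b\!\otimes\!h):=b\,\Lambda(h)$. This is exactly the Frobenius form $\pi$ from Part (1), and the computation you already need there (namely $\sum_{(h)} h_{(1)}\Lambda(h_{(2)})=\Lambda(h)1$, from the left-integral identity $f\!\ast\!\Lambda=f(1)\Lambda$) shows $\rho$ is right $B$-linear; the normalization $\Lambda(1)=1$ is precisely what makes $\rho|_B=\id_B$. The second repair is the paper's own argument, which avoids integrals entirely: semisimplicity of $H^\ast$ splits the right $H^\ast$-module $H$ as $k1\oplus V$; since $V$ is then a left $H$-subcomodule one has $\Delta(V)\subseteq H\!\otimes_k\!V$, and a one-line check with the smash multiplication shows $B\!\otimes_k\!V$ is a right $B$-submodule, giving the $(B,B)$-decomposition $B\sharp H = B\oplus(B\!\otimes_k\!V)$.
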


\begin{proof} (1) Let $\lambda \in H^*$ be a non-zero left integral of $H^*$, that is, a linear form satisfying
\[ (\ast) \ \ \ \ \ \ \ f\ast\lambda = f(1)\, \lambda \ \ \ \forall \ f \in H^* ,\]
where $f\!\ast\!g(h)=\sum_{(h)}f(h_{(1)})g(h_{(2)})$ denotes the convolution of $f,g \in H^\ast$. We define a $k$-linear map $\pi : B\sharp H \lra B$ by means of
\[ \pi(a \otimes h) := a\, \lambda (h)  \ \ \ \ \ \ \forall \ a \in B,\ h \in H. \]
Note that $\pi((a \otimes 1).(b \otimes h)) = \pi(ab \otimes h) = ab \lambda(h) = a \pi(b \otimes h)$ as well as
\begin{eqnarray*}
\pi((a \otimes h).(b \otimes 1)) & = & \sum_{(h)} \pi(a(h_{(1)}\dact b)
\otimes h_{(2)} 1) = \sum_{(h)} a (h_{(1)} \dact b) \lambda(h_{(2)}) \\
& = & a\, ((\sum_{(h)} h_{(1)} \lambda(h_{(2)}))\dact b).
\end{eqnarray*}
Given $f \in H^*$ we obtain, observing ($\ast$),
\[ f( \lambda (h)1) = f(1) \, \lambda(h) = \sum_{(h)} f(h_{(1)}) \lambda(h_{(2)}) = f(\sum_{(h)} h_{(1)} \lambda(h_{(2)})), \]
so that the last term above equals
\[ a \, \lambda(h) b = \pi(a \otimes h)b. \]
Since $\lambda$ is a Frobenius homomorphism of the Frobenius algebra $H$ (cf.\ \cite{LS}), there exist bases $\{x_1, \ldots , x_n \}$;  $\{y_1 , \ldots , y_n \}$ of $H$ such that $\lambda(x_i \, y_j) = \delta_{i,j}$ for $1 \leq i,j \leq n$. Observing
\begin{eqnarray*}
\pi((1 \otimes x).(1 \otimes y)) & = & \pi (\sum_{(x)} \varepsilon(x_{(1)})1_B \otimes x_{(2)}y) = \sum_{(x)}\varepsilon (x_{(1)})
\lambda (x_{(2)}y) \\
& = & \lambda(\sum_{(x)}\varepsilon (x_{(1)})x_{(2)}y) = \lambda(xy),
\end{eqnarray*}
we obtain $\pi((1 \otimes x_i).(1\otimes y_j)) = \delta_{i,j} \ \ \ 1 \leq i,j \leq n. $ By definition, the set $\{1\otimes x_1, \ldots , 1 \otimes x_n \}$ is a basis for the left $B$-module $B\sharp H$. Suppose that
$0 = \sum_{i=1}^n (1\otimes y_i).(b_i\otimes 1)$ for some $b_i \in B$. Then we have $0 = \pi((1\otimes x_j).0) = \sum_{i=1}^n \pi((1\otimes x_j)(1\otimes y_i))b_i = b_j$ for $1\le j \le n$. As a result, the
$k$-vector space $\sum_{i=1}^n(1\otimes y_i).B$ has dimension $\dim_kB\sharp H$, so that $ \{1 \otimes y_1, \ldots , 1 \otimes y_n \} $ generates $B \sharp H$ as a right $B$-module. The result now follows from \cite[(1.2)]{BF}.

(2) By assumption, the algebra $H^\ast$ is semisimple. Recall that the left $H$-comodule $H$ obtains the structure of a right $H^*$-module via
\[ h.\psi := \sum_{(h)} \psi(h_{(1)})h_{(2)} \ \ \ \ \ \ \forall \ h \in H,\ \psi \in H^\ast.\]
Consequently, $k1$ is an $H^\ast$-submodule of $H$ and thus possesses a direct complement $V$. By general theory \cite[(1.6.4)]{Mo}, $V$ is a left subcomodule of $H$, so that
$\Delta(V) \subseteq H\!\otimes_k\!V$. There results a vector space decomposition
\[ B\sharp H = B\oplus (B\!\otimes_k\!V),\]
which obviously is a decomposition of left $B$-modules. It remains to show that $B\!\otimes_k\!V$ is a right $B$-submodule of $B\sharp H$.
Given $a,b \in B$ and $v \in V$, we have
\[ (a\otimes v).(b\otimes1) = \sum_{(v)} a(v_{(1)}\dact b)\otimes v_{(2)} \in B\!\otimes_k\!V,\]
as desired.  

(3) We denote by $c_H$ and $c_{B\sharp H:B}$ the Gasch\"utz-Ikeda operators of the Frobenius extensions $H\!:\!k$ and $B\sharp H\!:\!B$, respectively. The arguments of (1) imply
\[ c_{B\sharp H:B}(1\!\otimes\!h) = \sum_{i=1}^n (1\!\otimes\!y_i).(1\!\otimes h).(1\!\otimes\!x_i) = 1\!\otimes c_H(h)\]
for every $h \in H$. Since $H$ is semisimple, the Frobenius extension $H\!:\!k$ is separable, and there exists an element $z$ of the center of $H$ such that $1= c_H(z) = \sum_{i=1}^n y_izx_i$, cf.\ \cite[(2.6)]{Fa1}. Consequently, 
\[ 1\!\otimes\! 1 = c_{B\sharp H:B}(1\!\otimes\!z) = (1\!\otimes\!z).c_{B\sharp H:B}(1\!\otimes\!1) = c_{B\sharp H:B}(1\!\otimes\!1).(1\!\otimes\!z).\]
As $c_{B\sharp H:B}(1\!\otimes\!1)$ belongs to the centalizer $C_{B\sharp H:B}(B)$, the element $1\!\otimes\!z$ enjoys the same property, and \cite[(2.6)]{Fa1} (cf.\ also \cite[(2.18)]{HS})
shows that the extension is separable. \end{proof}

\bigskip

\begin{Prop} \label{SP2} Let $B$ be an $H$-module algebra. Then the following statements hold:
\begin{enumerate}
\item If $H$ is cosemisimple and $B\sharp H$ is of polynomial growth, then $B$ is of polynomial growth with $\gamma_B \le \gamma_{B\sharp H}\!+\!1$.
\item If $H$ is semisimple and $B$ is of polynomial growth, then $B\sharp H$ is of polynomial growth with $\gamma_{B\sharp H} \le \gamma_B\!+\!1$. \end{enumerate} \end{Prop}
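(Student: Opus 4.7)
The plan is to reduce both statements directly to the two main theorems of Sections 2 and 3, using the structural results about the extension $B\sharp H\!:\!B$ furnished by Proposition \ref{SP1}. Since we already know that cosemisimplicity of $H$ makes $B\sharp H\!:\!B$ split, and semisimplicity of $H$ makes it separable, the proposition is essentially an immediate corollary.

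For part (1), I would first invoke Proposition \ref{SP1}(2) to conclude that, under the cosemisimplicity hypothesis on $H$, the extension $B\sharp H\!:\!B$ is split. Because $B\sharp H$ is assumed to be of polynomial growth, Theorem \ref{SpE3} then applies with $A := B\sharp H$, yielding that $B$ is of polynomial growth with rate bounded by $\gamma_B \le \gamma_{B\sharp H}+1$. No further computation is required.

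For part (2), I would analogously appeal to Proposition \ref{SP1}(3) to obtain that semisimplicity of $H$ forces $B\sharp H\!:\!B$ to be a separable extension. Since $B$ is of polynomial growth by hypothesis, Theorem \ref{SE3} now applies directly with $A := B\sharp H$ and gives that $B\sharp H$ is of polynomial growth with $\gamma_{B\sharp H}\le \gamma_B+1$.

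There is no real obstacle here: the technical work has already been done in the two main theorems and in Proposition \ref{SP1}. The only thing one might worry about is a mismatch between the hypotheses in Theorems \ref{SpE3} and \ref{SE3} and the setup here, but both theorems are stated for arbitrary finite-dimensional algebra extensions $A\!:\!B$, and Proposition \ref{SP1} provides exactly the split/separable property needed. One could, alternatively, sharpen part (2) in the special case where $H=kG$ is a group algebra with $p\nmid |G|$ by appealing to Lemma \ref{SE2} in place of Lemma \ref{SE1}, which would improve the bound to $\gamma_{B\sharp H}\le \gamma_B$; but that refinement is not part of the stated proposition.
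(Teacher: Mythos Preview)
Your proposal is correct and matches the paper's proof essentially verbatim: invoke Proposition \ref{SP1}(2) and Theorem \ref{SpE3} for part (1), and Proposition \ref{SP1}(3) and Theorem \ref{SE3} for part (2). Your side remark about the sharpening for $H=kG$ via Lemma \ref{SE2} is also on target and is precisely what the paper carries out in Corollary \ref{SP3}.
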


\begin{proof} (1) Thanks to Proposition \ref{SP1}(2), the extension $B\sharp H\!:\!B$ is split. Consequently, Theorem \ref{SpE3} implies our result. 

(2) In view of Proposition \ref{SP1}(3), the extension $B\sharp H\!:\!B$ is separable and the assertion follows from Theorem \ref{SE3}. \end{proof} 

\bigskip
\noindent
Our next result refines \cite[(3.3)]{dP1}.

\bigskip

\begin{Cor} \label{SP3} Let $G$ be a finite group acting on a $k$-algebra $B$. Then the following statements hold:
\begin{enumerate}
\item If $B\!\ast\!G$ has polynomial growth, then $B$ has polynomial growth with $\gamma_B \le \gamma_{B\ast G}$.
\item Suppose that $p\nmid {\rm ord}(G)$. Then $B$ has polynomial growth if and only if $B\!\ast\!G$ has polynomial growth with $\gamma_{B\ast G} = \gamma_B$. \end{enumerate}\end{Cor}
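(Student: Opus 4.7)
The plan is to deduce both parts from the ascent/descent results of Sections 2 and 3. Part (1) follows from Corollary \ref{SpE4} once the bimodule structure of $B\!\ast\!G$ is unpacked, while for part (2) the ``if'' implication is already covered by (1). The ``only if'' direction together with the sharp bound $\gamma_{B\ast G}\le \gamma_B$ will follow by rerunning the proof of Theorem \ref{SE3} verbatim but with Lemma \ref{SE2} substituted for Lemma \ref{SE1}.

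For part (1), I identify $B\!\ast\!G\!:\!B$ as an extension of the type covered by Corollary \ref{SpE4}. As a $(B,B)$-bimodule one has $B\!\ast\!G = \bigoplus_{g \in G} Bg$, and a direct calculation (as in the group-algebra example preceding Corollary \ref{SpE4}) shows that each summand $Bg$ is isomorphic to $B^{(\id_B \otimes \zeta_g)}$ for a suitable automorphism $\zeta_g \in \Aut(B)$ associated to the action of $g$. The hypothesis of Corollary \ref{SpE4} is therefore met, so polynomial growth of $B\!\ast\!G$ descends to $B$ with $\gamma_B \le \gamma_{B\ast G}$.

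For the ``only if'' direction of (2), assume $B$ has polynomial growth and $p \nmid {\rm ord}(G)$. Proposition \ref{SP1}(3) ensures that $B\!\ast\!G\!:\!B$ is separable, so Theorem \ref{SE3} already gives polynomial growth of $B\!\ast\!G$. To obtain the sharp bound $\gamma_{B\ast G} \le \gamma_B$, I re-execute the proof of Theorem \ref{SE3} with $A = B\!\ast\!G$, using Lemma \ref{SE2} instead of Lemma \ref{SE1} to generate the parametrizing bimodules $Y_1,\ldots,Y_{\ell(d)}$. The geometric apparatus---irreducible curves $V_i = \overline{\im f_i} \subseteq \modd_{B\ast G}^d$ and $W_j = \overline{\im g_j} \subseteq \modd_B^{r_j}$, the constructible incidence set $\cS(i,j)$ of pairs $(M,N)$ with $M \mid (B\!\ast\!G) \otimes_B N$ and $N \mid M|_B$, the dominance claims (a)--(d), and the cardinality bound $|\Gamma_j| \le \dim_k(B\!\ast\!G)$---goes through unchanged, since Lemma \ref{SE2} supplies precisely the same three structural properties (parametrization, rank $\le d$, finite intersection of the parametrized family with $\GL_{r_j}(k)$-orbits) as Lemma \ref{SE1}, with only the numerical estimate on $\gamma_\ell$ improved from $\gamma_B\!+\!1$ to $\gamma_B$. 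The concluding inequality then reads $\mu_{B\ast G}(d) \le \ell(d)\dim_k(B\!\ast\!G) \le c\,d^{\gamma_B}$, giving $\gamma_{B\ast G} \le \gamma_B$. Combined with (1), this yields the desired equality.

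The main point requiring care is verifying that Lemma \ref{SE2} can replace Lemma \ref{SE1} in the proof of Theorem \ref{SE3} without loss elsewhere; inspection shows that the structural outputs of the two lemmas are identical modulo the improved bound on $\gamma_\ell$, so the substitution is legitimate and propagates cleanly to the final estimate.
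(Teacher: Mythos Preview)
Your proposal is correct and follows essentially the same route as the paper: part (1) via the bimodule decomposition $B\!\ast\!G=\bigoplus_{g\in G}Bg$ with $Bg\cong B^{(\id_B\otimes g^{-1}\dact)}$ and Corollary \ref{SpE4}, and part (2) by rerunning Theorem \ref{SE3} with Lemma \ref{SE2} in place of Lemma \ref{SE1}. One small slip: since $\gamma_\ell\le\gamma_B$ means $\ell(d)\le c\,d^{\gamma_B-1}$, your final display should read $\mu_{B\ast G}(d)\le \ell(d)\dim_k(B\!\ast\!G)\le c'\,d^{\gamma_B-1}$ (not $d^{\gamma_B}$), which is what actually yields $\gamma_{B\ast G}\le\gamma_B$.
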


\begin{proof} (1) Since $B\!\ast \!G = \bigoplus_{g\in G}Bg$ is a $(B,B)$-direct decomposition of $B\!\ast\!G$ with $Bg \cong B^{(\id_B\otimes g^{-1}\dact)}$, the assertion is a direct consequence 
of Corollary \ref{SpE4}.

(2) By replacing the reference to Lemma \ref{SE1} by Lemma \ref{SE2} in the proof of Theorem \ref{SE3}, we conclude that $B$ being of polynomial growth implies that $B\!\ast\!G$ has polynomial growth with $\gamma_{B\ast G} \le \gamma_B$. Part (1) provides the reverse direction as well as the reverse inequality. \end{proof}  

\bigskip
\noindent
A finite group scheme $\cG$ is said to be {\it linearly reductive}, provided its algebra of measures $k\cG$ is semisimple. If $B$ is an algebra, viewed as a functor $R \mapsto B\!\otimes_k\!R$ from commutative $k$-algebras to $k$-algebras, then there is the notion of $\cG$ acting on $B$ via automorphisms. This is quivalent to $B$ being a $k\cG$-module algebra. For the general theory of affine group schemes, we refer to \cite{Ja,Wa}. 

\bigskip

\begin{Cor} \label{SP4} Let $\cG$ be a linearly reductive group scheme that operates on $B$ via automorphisms. If $B$ has polynomial growth, then $B\sharp k\cG$ has polynomial growth of rate $\gamma_{B\sharp k\cG} \le \gamma_B$. \end{Cor}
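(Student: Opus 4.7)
The plan is to reduce the statement to the previously-established results of Section~4 by decomposing $\cG$ via Nagata's structure theorem for linearly reductive finite group schemes. Since $\cG$ is linearly reductive over the algebraically closed field $k$ of characteristic $p$, Nagata's theorem supplies a split decomposition $\cG \cong \ccD \rtimes \cH$, in which $\ccD := \cG^\circ$ is infinitesimal of multiplicative type (hence diagonalizable) and $\cH := \pi_0(\cG)$ is a constant étale group of order prime to $p$. Translating to measure algebras, $k\cG \cong k\ccD \sharp k\cH$ as Hopf algebras; and since $\cH$ is a constant group, this further specializes to a skew group algebra structure giving the identification
\[
B \sharp k\cG \;\cong\; (B \sharp k\ccD) \ast \cH,
\]
where $\cH$ acts on $B \sharp k\ccD$ through its simultaneous actions on $B$ and on $\ccD$.

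Next I would handle the diagonalizable piece $\ccD$. Let $\Gamma$ denote the character group of $\ccD$, so that $k\ccD \cong (k\Gamma)^*$ and the $\ccD$-module algebra structure on $B$ amounts to a $\Gamma$-grading $B = \bigoplus_{\gamma \in \Gamma} B_\gamma$. Cohen--Montgomery duality then yields a canonical algebra isomorphism
\[
(B \sharp k\ccD) \ast \Gamma \;\cong\; M_{|\Gamma|}(B),
\]
in which $\Gamma$ acts on $B \sharp k\ccD$ through its translation action on $(k\Gamma)^*$. Since $M_{|\Gamma|}(B)$ is Morita equivalent to $B$, Remark~(c) following the definition of polynomial growth gives $\gamma_{(B\sharp k\ccD)\ast \Gamma} = \gamma_B$. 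Applying Corollary~\ref{SP3}(1) to the extension $(B \sharp k\ccD) \ast \Gamma : B \sharp k\ccD$ --- which carries no hypothesis on $|\Gamma|$ and is therefore applicable even when $p \mid |\Gamma|$ --- then delivers
\[
\gamma_{B\sharp k\ccD} \;\le\; \gamma_{(B\sharp k\ccD)\ast\Gamma} \;=\; \gamma_B.
\]

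Finally, I would invoke Corollary~\ref{SP3}(2) for the skew group extension $(B \sharp k\ccD) \ast \cH : B \sharp k\ccD$. Since $|\cH|$ is prime to $p$, this yields $\gamma_{(B\sharp k\ccD)\ast\cH} = \gamma_{B\sharp k\ccD}$, so that
\[
\gamma_{B\sharp k\cG} \;=\; \gamma_{(B\sharp k\ccD)\ast\cH} \;=\; \gamma_{B\sharp k\ccD} \;\le\; \gamma_B,
\]
as required.

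The main obstacle is making the identification $B \sharp k\cG \cong (B \sharp k\ccD) \ast \cH$ rigorous when the Nagata decomposition involves a non-trivial semidirect product action of $\cH$ on $\ccD$: the interplay of the three smash products (by $\ccD$ on $B$, by $\cH$ on $B$, and by $\cH$ on $\ccD$) must be tracked, and associativity of these operations verified. Once this bookkeeping is in place the argument is a clean combination of Cohen--Montgomery duality with Corollary~\ref{SP3}, each step of which reduces the problem by one layer of the structural decomposition of $\cG$.
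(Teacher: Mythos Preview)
Your proposal is correct and follows essentially the same route as the paper: decompose $\cG$ via Nagata's theorem into a diagonalizable infinitesimal part $\ccD$ and a reduced part of order prime to $p$, handle $\ccD$ by duality (the paper cites van den Bergh's theorem where you invoke Cohen--Montgomery, which amounts to the same identification $(B\sharp k\ccD)\sharp k\ccD^\ast \cong \Mat_{|\Gamma|}(B)$) together with Corollary~\ref{SP3}(1), and then finish with Corollary~\ref{SP3}(2) for the constant group. The paper likewise does not belabor the associativity bookkeeping you flag as the main obstacle.
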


\begin{proof} By general theory \cite[(6.8)]{Wa}, the group scheme $\cG$ is a semi-direct product
\[ \cG = \cG^0\rtimes \cG_{\rm red},\]
with an infinitesimal, normal subgroup $\cG$ and a reduced group scheme $\cG_{\rm red}$. This implies that 
\[ k\cG = k\cG^0\!\ast\!\cG(k)\]
is a skew group algebra. Since $\cG$ is linearly reductive, Nagata's Theorem \cite[(IV.\S3.3.6)]{DG} ensures that $\cG^0$ is diagonalizable and $p\nmid{\rm
ord}(\cG(k))$. 

We first assume that $\cG$ is infinitesimal, so that $\cG=\ccD$ is diagonalizable. By van den Bergh's Theorem \cite{vdB} (see also \cite[p.167]{Mo}), we have
\[ (B\sharp k\ccD)\sharp k\ccD^\ast  \cong B\!\otimes_k\! \End_k(k\ccD),\]
so that the two-fold smash product is of polynomial growth with rate $\gamma_{(B\sharp k\ccD)\sharp k\ccD^\ast} = \gamma_B$. 
Since $\ccD$ is diagonalizable, $k\ccD^\ast = kX(\ccD)$ is the group algebra of the character group $X(\ccD)$ of $\ccD$. Thus,
$(B\sharp k\ccD)\sharp k\ccD^\ast = (B\sharp k\ccD)\!\ast\!X(\ccD)$, and Corollary \ref{SP3}(1) shows that $B\sharp k\ccD$ has
polynomial growth of rate $\gamma_{B\sharp k\ccD} \le \gamma_B$. 

In the general case, we write $\cG = \ccD \rtimes G$, where $\ccD$ is diagonalizable and $G$ is finite with $p\nmid {\rm ord}(G)$. By the first part, the algebra $C:= B\sharp k\ccD$
is of polynomial growth with rate $\gamma_C \le \gamma_B$. Since $B\sharp k\cG \cong C\!\ast\!G$, the assertion now follows from Corollary \ref{SP3}(2).  \end{proof}

\bigskip

\subsection{Blocks of group algebras}
In preparation for our discussion of finite group schemes, we briefly recall the relevant result for the ``classical'' case concerning finite groups.  Recall that polynomial growth and domesticity are preserved under Morita equivalence. 

Let $G$ be a finite group, $\cB \subseteq kG$ be a block of the group algebra $kG$. In the sequel, we denote by $D_\cB \subseteq G$ the defect group of $\cB$, a $p$-subgroup of $G$.
If $kG$ affords a tame block $\cB$, then $p=2$ and $D_\cB$ is dihedral, semidihedral, or generalized quaternion, cf.\ \cite[(4.4.4)]{Be}. We illustrate the use of Theorem \ref{SpE3} via the following well-known result, which underscores the scarcity of representation-infinite blocks of polynomial growth:

\bigskip

\begin{Thm} Suppose that $\Char(k)=2$. Let $\cB \subseteq kG$ be a representation-infinite block. Then the following statements are equivalent:
\begin{enumerate}
\item The block $\cB$ is of polynomial gowth.
\item $D_\cB \cong \ZZ/(2)\!\times\!\ZZ/(2)$.
\item $\cB$ is Morita equivalent to $k(\ZZ/(2)\!\times\!\ZZ/(2))$, $kA_4$, or to the principal block of $kA_5$.
\item The block $\cB$ is domestic. \end{enumerate}\end{Thm}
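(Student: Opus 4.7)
The plan is to close the cycle $(4) \Rightarrow (1) \Rightarrow (2) \Rightarrow (3) \Rightarrow (4)$, two of whose implications are essentially formal while the other two reduce to Erdmann's classification of tame blocks of group algebras in characteristic $2$.

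The implication $(4) \Rightarrow (1)$ is immediate from Remark (b) on polynomial growth: every domestic algebra has polynomial growth with $\gamma \le 1$. For $(3) \Rightarrow (4)$ I would verify domesticity separately for each of the three Morita representatives, exploiting the explicit description of the module category of $k(\ZZ/(2)\!\times\!\ZZ/(2))$ found in \cite[(4.4.4)]{Be} and carrying out (or citing) the analogous AR-quiver analysis for $kA_4$ and the principal block of $kA_5$.

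The genuine content of the theorem sits in the implication $(1) \Rightarrow (2)$. Since $\cB$ is representation-infinite and of polynomial growth, Brauer-Thrall II (Remark (a)) forces $\cB$ to be tame with $\gamma_\cB \ge 1$, and Erdmann's classification of tame blocks \cite[(4.4.4)]{Be} then restricts $D_\cB$ to be dihedral, semidihedral, or generalized quaternion. The crux is to rule out defect groups of order $\ge 8$: for this I would invoke Erdmann's Morita classification of the resulting basic algebras of dihedral, semidihedral, and quaternion type, together with the growth analysis due to Skowro\'nski (cf.\ \cite{Sk1}) showing that among these families only the blocks with Klein four defect group are of polynomial growth. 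I would point out that the extension-theoretic machinery of Section 2 yields only partial leverage here: the split extension $kG\!:\!kD_\cB$ provided by Example (2) and Theorem \ref{SpE3} shows that polynomial growth of $kG$ descends to $kD_\cB$, but since $\cB$ is merely a direct summand of $kG$, one cannot infer polynomial growth of $kG$ from that of $\cB$, so the explicit block-theoretic classification cannot be bypassed. This step is the main obstacle.

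The remaining implication $(2) \Rightarrow (3)$ is Erdmann's Morita-equivalence classification for blocks with Klein four defect group: up to Morita equivalence precisely the three listed algebras occur, the three cases corresponding to the possible inertial quotients (equivalently, the fusion systems) on the defect group.
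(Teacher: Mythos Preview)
Your cycle $(4) \Rightarrow (1) \Rightarrow (2) \Rightarrow (3) \Rightarrow (4)$ matches the paper's, and the peripheral implications are fine: $(4) \Rightarrow (1)$ is trivial, $(2) \Rightarrow (3)$ is the Klein-four block classification (the paper cites \cite[(6.6.1),(6.6.3)]{Be} according to inertial index $e\in\{1,3\}$), and for $(3) \Rightarrow (4)$ the paper simply notes that $V_4$ is a Sylow $2$-subgroup of each of $V_4$, $A_4$, $A_5$ and invokes \cite[(4.4.4)]{Be}, which is quicker than your case-by-case check but equivalent.

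The real divergence is in $(1) \Rightarrow (2)$, and here your remark that the machinery of Section~2 ``yields only partial leverage'' is exactly where you miss the paper's idea. You are right that the split extension $kG\!:\!kD_\cB$ is useless, since polynomial growth of the single block $\cB$ says nothing about $kG$. But the paper does not use that extension. Instead it passes through Brauer's First Main Theorem: with $H := D_\cB C_G(D_\cB)$, the Brauer correspondent $\fb \subseteq kH$ of $\cB$ satisfies $\fb \subseteq \cB$, and the extension $\cB\!:\!\fb$ is itself split (cf.\ \cite[Thm.~2, p.103]{Al} and the remark following it). Theorem~\ref{SpE3} then applies \emph{at the block level}: $\fb$ inherits polynomial growth from $\cB$. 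Since $\fb$ is Morita equivalent to $kD_\cB$ by \cite[(6.4.6)]{Be}, the group algebra $kD_\cB$ is of polynomial growth, and now \cite[Prop.~1]{Sk1}---a statement about group algebras of $p$-groups, far lighter than Erdmann's block classification---forces $D_\cB \cong \ZZ/(2)\!\times\!\ZZ/(2)$.

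Your proposed route through Erdmann's full Morita classification of dihedral, semidihedral and quaternion blocks, followed by a growth analysis of each family, would in principle reach the same conclusion, but it imports a substantial body of external case work (and \cite{Sk1} alone does not quite cover it, since Erdmann's basic algebras are not group algebras). More importantly, it bypasses the very theorem the paper is written to illustrate. The Brauer-correspondence observation is what makes Theorem~\ref{SpE3} bite here, and it is the one ingredient you should add.
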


\begin{proof} (1) $\Rightarrow$ (2) We put $V_4 := \ZZ/(2)\!\times\!\ZZ/(2)$. Let $C_G(D_\cB)$ be the centralizer of $D_\cB$ in $G$ and consider the subgroup $H:= D_\cB C_G(D_\cB)$. 
In view of Brauer's First Main Theorem (cf.\ \cite[Thm.2,p.103]{Al} and its succeeding remark), there exists a block $\fb \subseteq \cB$ of $kH$ such that the extension $\cB\!:\!\fb$ is split. Theorem \ref{SpE3} now shows 
that $\fb$ is of polynomial growth. Thanks to \cite[(6.4.6)]{Be}, the algebras $\fb$ and $kD_{\cB}$ are Morita equivalent, so that $kD_\cB$ is of polynomial growth. Since $\cB$ is representation-infinite, the group $D_\cB$ is not cyclic, see \cite[(6.5)]{Be}. Hence \cite[Prop.1]{Sk1} implies $D_\cB \cong V_4$. 

(2) $\Rightarrow$ (3) Let $e$ be the inertial index of $\cB$. General theory implies $e \in \{1,3\}$. If $e=1$, then \cite[(6.6.1)]{Be} provides an isomorphism $\cB \cong \Mat_n(kV_4)$. If $e=3$, then \cite[(6.6.3)]{Be} shows that $\cB$ is Morita equivalent to the group algebra $kA_4$ of the alternating group $A_4$ on $4$ letters, or to the principal block of $kA_5$. 

(3) $\Rightarrow$ (4) Since $V_4$ is the Sylow-$2$-subgroup of $V_4,A_4$ and $A_5$, \cite[(4.4.4)]{Be} implies that $\cB$ is domestic. 

(4) $\Rightarrow$ (1) trivial. \end{proof}

\bigskip

\subsection{Group schemes of domestic representation type}
We shall use our results to determine (up to a linearly reductive normal subgroup) all finite $k$-group schemes $\cG$ of odd characteristic, whose algebra $k\cG$ of measures is representation-infinite and domestic. As in the case of finite groups, non-domestic group schemes do not have polynomial growth.
 
Let $(\Lambda,\varepsilon)$ be an augmented $k$-algebra. There is a unique block $\cB_0(\Lambda)$ such that $\varepsilon(\cB_0(\Lambda)) \ne (0)$. This block is referred to as the {\it principal block} of $\Lambda$. If the finite group $G$ acts on $\Lambda$
via automorphisms of augmented algebras, then $\Lambda\!\ast\!G$ is augmented with augmentation $\varepsilon_G$ satisfying
\[ \varepsilon_G(\lambda g) = \varepsilon(\lambda) \ \ \ \ \ \ \forall \ \lambda \in \Lambda, \ g \in G.\]

\bigskip

\begin{Lem} \label{GS2} There exists a $(\Lambda,\Lambda)$-bimodule $X$ such that $\cB_0(\Lambda\!\ast\!G) = \cB_0(\Lambda)\oplus X$. \end{Lem}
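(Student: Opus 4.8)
The plan is to analyze how the block decomposition of $\Lambda\!\ast\!G$ relates to that of $\Lambda$, using the fact that $G$ permutes the blocks of $\Lambda$. First I would recall that, as a $(\Lambda,\Lambda)$-bimodule, $\Lambda\!\ast\!G \cong \bigoplus_{g \in G}\Lambda g$ decomposes into copies of the twisted bimodules $\Lambda^{(\id_\Lambda\otimes g^{-1}\dact)}$, so it suffices to locate $\cB_0(\Lambda\!\ast\!G)$ inside this decomposition. The key observation is that $G$ acts on $\Lambda$ by augmentation-preserving automorphisms, hence $G$ acts on the set of (primitive central idempotents and thus) blocks of $\Lambda$; since each $g \in G$ satisfies $\varepsilon\circ g = \varepsilon$, the principal idempotent $e_0$ of $\cB_0(\Lambda)$ is fixed by $G$. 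Therefore $e_0$ is a central idempotent of $\Lambda\!\ast\!G$ (it commutes with all $\lambda \in \Lambda$ because it is central in $\Lambda$, and $g e_0 = (g\dact e_0)g = e_0 g$ for all $g \in G$), and $\varepsilon_G(e_0) = \varepsilon(e_0) = 1 \ne 0$.

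Next I would argue that $e_0$ is in fact the principal block idempotent of $\Lambda\!\ast\!G$, or at least that $\cB_0(\Lambda\!\ast\!G)$ is a direct summand of $e_0(\Lambda\!\ast\!G)$ as a $(\Lambda,\Lambda)$-bimodule. Write $\Lambda\!\ast\!G = e_0(\Lambda\!\ast\!G) \oplus (1\!-\!e_0)(\Lambda\!\ast\!G)$; since $\varepsilon_G$ vanishes on the second summand, the block $\cB_0(\Lambda\!\ast\!G)$, being the unique block not killed by $\varepsilon_G$, is contained in $e_0(\Lambda\!\ast\!G)$. Now $e_0(\Lambda\!\ast\!G) = \bigoplus_{g\in G}e_0\Lambda g = \bigoplus_{g \in G}\cB_0(\Lambda)g$ as $(\Lambda,\Lambda)$-bimodules (using that $e_0$ is central in $\Lambda$), and this is a finite direct sum of bimodules on which $\cB_0(\Lambda\!\ast\!G)$ acts; decomposing $e_0(\Lambda\!\ast\!G)$ into its block components as an algebra, $\cB_0(\Lambda\!\ast\!G)$ is one of them, and the complementary blocks together with the $g\ne 1$ summands assemble into the desired bimodule $X$. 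Concretely, one sets $X := \big(\bigoplus_{g\ne 1}\cB_0(\Lambda)g\big) \oplus \big(\text{other blocks of }e_0(\Lambda\!\ast\!G)\big) \oplus (1\!-\!e_0)(\Lambda\!\ast\!G)$, which is a $(\Lambda,\Lambda)$-subbimodule of $\Lambda\!\ast\!G$ complementary to $\cB_0(\Lambda) = e_0\Lambda\cdot 1 \subseteq \cB_0(\Lambda\!\ast\!G)$, and one checks that $\cB_0(\Lambda) \oplus X = \Lambda\!\ast\!G \supseteq \cB_0(\Lambda\!\ast\!G) \oplus X'$ forces, after comparing, $\cB_0(\Lambda\!\ast\!G) = \cB_0(\Lambda)\oplus X''$ for a suitable bimodule $X''$.

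The cleanest route avoids the last bit of bookkeeping: I would show directly that $\cB_0(\Lambda)$, regarded as $e_0\Lambda\cdot 1 \subseteq \Lambda\!\ast\!G$, is a $(\Lambda,\Lambda)$-sub-bimodule of $\cB_0(\Lambda\!\ast\!G)$, and then invoke that a sub-bimodule of a finite-dimensional bimodule over a semisimple-modulo-radical situation need not split in general — so instead I would exhibit the splitting explicitly. Since $\cB_0(\Lambda) = e_0\Lambda$ sits inside $\cB_0(\Lambda\!\ast\!G) = \tilde e_0(\Lambda\!\ast\!G)$ where $\tilde e_0$ is the principal block idempotent of $\Lambda\!\ast\!G$, and since $e_0\tilde e_0 = \tilde e_0 e_0 = e_0$ (as $e_0 \in \cB_0(\Lambda\!\ast\!G)$, being a central idempotent with $\varepsilon_G(e_0)\ne 0$ lying in the span of the principal block, forces $e_0 = e_0\tilde e_0$), the $(\Lambda,\Lambda)$-bimodule $\cB_0(\Lambda\!\ast\!G) = e_0\cdot\cB_0(\Lambda\!\ast\!G) \oplus (\tilde e_0\!-\!e_0)\cdot\cB_0(\Lambda\!\ast\!G)$ splits, with first summand $e_0(\Lambda\!\ast\!G)\tilde e_0 \supseteq e_0\Lambda$. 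A small extra argument identifies $e_0\cdot\cB_0(\Lambda\!\ast\!G)$ with $\cB_0(\Lambda)$ itself on the nose (both equal $e_0\Lambda$ once one checks $e_0(\Lambda\!\ast\!G)e_0 = e_0\Lambda e_0 = e_0\Lambda$, using $e_0 g e_0 = 0$ for $g\ne 1$ when $G$ acts faithfully, or more carefully handling the general case), yielding $X := (\tilde e_0 - e_0)(\Lambda\!\ast\!G)\tilde e_0$. The main obstacle I anticipate is precisely this last identification: showing that the ``$e_0$-part'' of the principal block of $\Lambda\!\ast\!G$ is exactly $\cB_0(\Lambda)$ and not something larger, which requires carefully controlling the cross terms $e_0\Lambda g e_0$ for $g\ne 1$ and the interaction between the $G$-action on blocks of $\Lambda$ and the block structure of $\Lambda\!\ast\!G$; if the naive identity fails one must instead be content with the weaker (but sufficient for later applications) statement that $\cB_0(\Lambda)$ is merely a direct summand of $\cB_0(\Lambda\!\ast\!G)$ as a bimodule, and prove that via idempotent lifting inside the subalgebra generated by $e_0$ and $\Lambda$.
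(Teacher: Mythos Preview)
The paper's own proof is a one-line citation of \cite[(5.1.2)]{Fa2}, so any self-contained argument will necessarily look different. Your setup is exactly right: since $G$ acts by augmented automorphisms, $e_0$ is $G$-fixed and hence central in $\Lambda\!\ast\!G$, and from $\varepsilon_G(1-e_0)=0$ one correctly deduces $\cB_0(\Lambda\!\ast\!G)\subseteq e_0(\Lambda\!\ast\!G)=\cB_0(\Lambda)\!\ast\!G$.

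The gap is in the idempotent relation. You assert $e_0\tilde e_0=e_0$, but the opposite holds: $\tilde e_0$ is a \emph{primitive} central idempotent of $\Lambda\!\ast\!G$, and $e_0\tilde e_0$ is a nonzero central idempotent below $\tilde e_0$ (nonzero since $\varepsilon_G(e_0\tilde e_0)=1$), so $e_0\tilde e_0=\tilde e_0$. Consequently $\tilde e_0\le e_0$, not $e_0\le \tilde e_0$; the subset $e_0\Lambda\cdot 1\subseteq\Lambda\!\ast\!G$ is in general \emph{not} contained in $\cB_0(\Lambda\!\ast\!G)$, and $\tilde e_0-e_0$ is not an idempotent (rather $e_0-\tilde e_0$ is). Your decomposition $\cB_0(\Lambda\!\ast\!G)=e_0\cdot\cB_0(\Lambda\!\ast\!G)\oplus(\tilde e_0-e_0)\cdot\cB_0(\Lambda\!\ast\!G)$ is therefore just $\cB_0(\Lambda\!\ast\!G)=\cB_0(\Lambda\!\ast\!G)\oplus 0$, and the claimed identification of the first summand with $\cB_0(\Lambda)$ has no content. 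The later remark that ``$e_0 g e_0=0$ for $g\ne 1$ when $G$ acts faithfully'' is also false: $e_0$ is a central idempotent, so $e_0 g e_0=e_0 g\ne 0$.

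What is actually needed is an argument that the \emph{algebra} map $\iota:\cB_0(\Lambda)\to\cB_0(\Lambda\!\ast\!G)$, $\lambda\mapsto\tilde e_0\lambda$, is injective and admits a $(\cB_0(\Lambda),\cB_0(\Lambda))$-bimodule retraction. Composing with the canonical projection $\pi:\Lambda\!\ast\!G\to\Lambda$ gives $\pi\circ\iota=$ multiplication by $c_1:=\pi(\tilde e_0)\in Z(\cB_0(\Lambda))$; the problem reduces to showing $c_1$ is a unit in the local ring $Z(\cB_0(\Lambda))$, and this is where the real work lies (and where the argument in \cite{Fa2} is invoked). Your write-up correctly senses an obstacle at the end, but the specific fix you propose (idempotent lifting) does not address it, because the difficulty is not the existence of a complementary idempotent but the compatibility of the two different bimodule decompositions of $e_0(\Lambda\!\ast\!G)$.
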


\begin{proof} This follows directly from the proof of \cite[(5.1.2)]{Fa2}. \end{proof}

\bigskip
\noindent
Let $\Lambda$ be a $k$-algebra. The split extension $T(\Lambda):= \Lambda\ltimes\Lambda^\ast$ of $\Lambda$ by its dual bimodule $\Lambda^\ast$ is referred to as the {\it trivial extension} of $\Lambda$. We shall apply this construction to radical square zero tame hereditary algebras. By definition, such an algebra is of the form
$kQ$, where the quiver $Q$ is a Euclidean diagram of type $\tilde{A}_n, \tilde{D}_n$ or $\tilde{E}_{6,7,8}$ with an orientation, such that there are no paths of length
$2$. (For $\tilde{A}_n$ this is possible if only if $n$ is odd.) 

We denote by $\cZ$ the center of the algebraic group scheme $\SL(2)$. Given a finite subgroup scheme $\cG \subseteq \SL(2)$, we put $\PP(\cG) := \cG/(\cG\cap\cZ)$. The unique largest linearly reductive normal subgroup scheme of a given finite group scheme $\cG$ will be denoted $\cG_{\rm lr}$.

Our final result characterizes the representation-infinite finite algebraic groups of domestic representation type, showing their close connection with binary polyhedral groups and tame hereditary algebras. Recall that a finite linearly reductive subgroup scheme of $\SL(2)$ is referred to as a {\it binary polyhedral group scheme}. We denote by $\SL(2)_1$ the first Frobenius kernel of $\SL(2)$.

\bigskip 

\begin{Thm} \label{GS3} Let $\cG$ be a finite group scheme of characteristic $p\ge 3$ over $k$. Then the following statements are equivalent:
\begin{enumerate}
\item The principal block $\cB_0(\cG)$ is representation-infinite and of polynomial growth.
\item There exists a radical square zero tame hereditary algebra $\Lambda$ such that $\cB_0(\cG)$ is Morita equivalent to $T(\Lambda)$.
\item The Hopf algebra $k\cG$ is representation-infinite and domestic.
\item The principal block $\cB_0(\cG)$ is representation-infinite and domestic. 
\item There exists a binary polyhedral group scheme $\tilde{\cG} \subseteq \SL(2)$ such that $\cG/\cG_{\rm lr} \cong \PP(\SL(2)_1\tilde{\cG})$. \end{enumerate} \end{Thm}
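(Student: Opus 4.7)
The plan is to run the cycle $(2)\Rightarrow(4)\Rightarrow(1)\Rightarrow(5)\Rightarrow(2)$, with the easy detour $(5)\Rightarrow(3)\Rightarrow(4)$ closing to (3). Of these, the classification step $(1)\Rightarrow(5)$ and the structural computation $(5)\Rightarrow(2)\wedge(3)$ carry almost all the content; $(4)\Rightarrow(1)$ is Remark (b) of the introduction, $(3)\Rightarrow(4)$ is the descent of polynomial growth/domesticity to a block (Remark (d) together with the analogous observation for domesticity), and $(2)\Rightarrow(4)$ follows from Happel's description \cite[(V.3.2)]{Ha} of the module category of $T(\Lambda)$ for a radical square zero tame hereditary $\Lambda=kQ$ as a self-injective special biserial algebra whose Auslander--Reiten quiver consists of a $\ZZ\tilde Q$-component together with a $\PP^1$-family of tubes, hence representation-infinite and domestic.

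For $(1)\Rightarrow(5)$, the strategy is to reduce to the case $\cG_{\rm lr}=1$ and then invoke prior classification. Since $\cG_{\rm lr}$ is normal and linearly reductive, Nagata's theorem and the Demazure--Gabriel decomposition used in the proof of Corollary \ref{SP4} yield a smash decomposition $k\cG \cong k(\cG/\cG_{\rm lr})\sharp k\cG_{\rm lr}$, and Lemma \ref{GS2} shows that $\cB_0(\cG)$ contains $\cB_0(\cG/\cG_{\rm lr})$ as a bimodule direct summand. Proposition \ref{SP2}(1) together with Corollary \ref{SP3}(1) then transfers polynomial growth from $\cB_0(\cG)$ to $\cB_0(\cG/\cG_{\rm lr})$. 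Once $\cG_{\rm lr}$ is trivial, one invokes the author's earlier classification of finite group schemes of odd characteristic whose principal block is representation-infinite of polynomial growth---built from complexity-two arguments, Auslander--Reiten theory of blocks of tame type, and Sylow theory for infinitesimal group schemes---to identify $\cG/\cG_{\rm lr}$ with $\PP(\SL(2)_1\tilde\cG)$ for some binary polyhedral subgroup scheme $\tilde\cG\subseteq\SL(2)$.

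For $(5)\Rightarrow(2)\wedge(3)$, begin with the isomorphism $\cG/\cG_{\rm lr}\cong\PP(\SL(2)_1\tilde\cG)$ and analyze the principal block of $\SL(2)_1\tilde\cG$ explicitly. Linear reductivity of $\tilde\cG$ gives $k(\SL(2)_1\tilde\cG)\cong U_0(\fsl(2))\sharp k\tilde\cG$; combining the known domestic structure of $U_0(\fsl(2))$ with the McKay correspondence between binary polyhedral subgroup schemes of $\SL(2)$ and the Euclidean Dynkin diagrams $\tilde A_n$, $\tilde D_n$, $\tilde E_{6,7,8}$ produces a Morita equivalence between $\cB_0(\SL(2)_1\tilde\cG)$ and $T(kQ_{\tilde\cG})$ for the appropriate radical square zero orientation. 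Passing to the quotient by $\cG\cap\cZ$, and then ascending through the linearly reductive normal extension $\cG_{\rm lr}\triangleleft\cG$ via Proposition \ref{SP1} and Lemma \ref{GS2}---which preserves principal blocks up to Morita equivalence---yields (2); domesticity of every block of $k(\SL(2)_1\tilde\cG)$ via the same McKay analysis then delivers (3).

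The main obstacle is the classification step $(1)\Rightarrow(5)$: while the reductions to the case $\cG_{\rm lr}=1$ are clean given the ascent/descent machinery of Sections 1--3 and the smash-product results of Section 4.1, pinning down the quotient structure requires the substantial earlier work on tame infinitesimal group schemes and their interaction with reduced finite groups via semi-direct decompositions. A secondary difficulty is the explicit McKay-style identification of the quiver $Q_{\tilde\cG}$ in $(5)\Rightarrow(2)$, which determines precisely which Euclidean diagram appears for each polyhedral type and which orientation produces the correct radical square zero shape.
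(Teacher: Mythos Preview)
Your cycle differs from the paper's ($(1)\Rightarrow(2)\Rightarrow(3)\Rightarrow(4)\Rightarrow(5)\Rightarrow(1)$), which is fine in principle, but two of your reduction steps rest on claims that do not hold.

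\textbf{The reduction to $\cG_{\rm lr}=e_k$ in $(1)\Rightarrow(5)$.} The smash decomposition $k\cG \cong k(\cG/\cG_{\rm lr})\sharp k\cG_{\rm lr}$ is not available. The Demazure--Gabriel splitting used in Corollary~\ref{SP4} is the connected--\'etale sequence $\cG=\cG^0\rtimes\cG_{\rm red}$, not a splitting of $\cG_{\rm lr}\trianglelefteq\cG$; in general $\cG_{\rm lr}\hookrightarrow\cG\twoheadrightarrow\cG/\cG_{\rm lr}$ need not split, and even when it does the resulting smash has $k\cG_{\rm lr}$ as the \emph{base}, not the Hopf factor. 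Moreover, $k\cG_{\rm lr}$ is semisimple but typically not cosemisimple (already for $\mu_p$), so Proposition~\ref{SP2}(1) does not apply, and Lemma~\ref{GS2} is stated only for skew group algebras $\Lambda\ast G$ with $G$ a finite abstract group. The paper bypasses all of this: it quotes \cite[(1.1)]{Fa2} for the direct isomorphism $\cB_0(\cG)\cong\cB_0(\cG/\cG_{\rm lr})$, and for the descent arguments in $(1)\Rightarrow(2)$ and $(2)\Rightarrow(3)$ it works instead with the genuine skew group algebra $k\cG=k\cG^0\ast\cG(k)$, where Lemma~\ref{GS2} and Corollary~\ref{SP3} apply verbatim.

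\textbf{The step $(5)\Rightarrow(3)$.} Establishing domesticity of every block of $k(\SL(2)_1\tilde\cG)$ gives you information about $k(\cG/\cG_{\rm lr})$, not about $k\cG$. Passing from the quotient back up to $k\cG$ is the wrong direction for Remark~(d), and your appeal to Lemma~\ref{GS2} as ``preserving principal blocks up to Morita equivalence'' overstates what that lemma says (it only provides a bimodule complement). The paper's route to (3) again goes through $\cG^0$: from $\cB_0(\cG^0)$ domestic one gets $k\cG^0$ domestic by \cite[(6.1)]{FS}, then \cite[(6.2.1)]{Fa2} forces $p\nmid|\cG(k)|$, and Corollary~\ref{SP3}(2) lifts domesticity to $k\cG=k\cG^0\ast\cG(k)$.

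In short, the essential mechanism you are missing is that the paper never tries to split off $\cG_{\rm lr}$; it uses the connected--\'etale decomposition $k\cG=k\cG^0\ast\cG(k)$ for all ascent/descent arguments and invokes the prior isomorphism $\cB_0(\cG)\cong\cB_0(\cG/\cG_{\rm lr})$ from \cite{Fa2} as a black box when the quotient is needed.
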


\begin{proof} Let $G := \cG(k)$ be the finite group of $k$-rational points. By general theory, we have
\[ k\cG = k\cG^0\sharp kG = k\cG^0\!\ast\!G.\]
(1) $\Rightarrow$ (2) In view of the above, Lemma \ref{GS2} implies that the extension $\cB_0(\cG)\!:\!\cB_0(\cG^0)$ is split. Theorem \ref{SpE3} thus shows that the algebra $\cB_0(\cG^0)$ is of polynomial growth. Thanks to \cite[(6.1)]{FS}, $\cB_0(\cG^0)$ is domestic. By \cite[(3.1)]{FV}, $\cB_0(\cG^0)$ is not representation-finite and \cite[(5.1)]{FS} implies that the center ${\rm Cent}(\cG^0)$ is diagonalizable. The assertion now follows from \cite[(7.3.1)]{Fa2}. 

(2) $\Rightarrow$ (3) The algebra $\Lambda$ is known to be representation-infinite and domestic \cite{DF,Na}. Thanks to \cite{Ta}, the trivial extension $T(\Lambda)$ also enjoys this property. It follows that the algebra $\cB_0(\cG)$ is domestic and of infinite representation type. As before, we see that $\cB_0(\cG^0)$ also has these properties, and \cite[(6.1)]{FS} implies the domesticity of $k\cG^0$. Since $\cB_0(\cG)$ is not representation-finite, \cite[(6.2.1)]{Fa2} ensures that $p$ does not divide ${\rm ord}(G)$. It now follows from Corollary \ref{SP3}(2) that $k\cG = k\cG^0\!\ast\! G$ is of polynomial growth with rate $\gamma_{k\cG} = \gamma_{k\cG^0} \le 1$. Hence $k\cG$ is domestic and of infinite representation type.

(3) $\Rightarrow$ (4) This follows from \cite[(3.1)]{FV}.

(4) $\Rightarrow$ (5) Let $T\subseteq \SL(2)$ be the standard torus of diagonal $(2\times 2)$-matrices of determinant $1$ and put $\cG' := \cG/\cG_{\rm lr}$. Then we have $\cG'_{\rm lr} = e_k$, while \cite[(1.1)]{Fa2} implies that $\cB_0(\cG') \cong \cB_0(\cG)$ is domestic. Since $\cG'_{\rm lr}=e_k$, a consecutive application of \cite[(1.2)]{Fa2} and \cite[(6.1)]{FS} provides an isomorphism $\cG'^0 \cong \SL(2)_1T_r$ for some $r\ge 1$. According to \cite[(6.2.1)]{Fa2}, the prime $p$ does not divide ${\rm ord}(\cG'(k))$, so that \cite[(1.2)]{Fa2} yields $C_{\cG'} := {\rm Cent}_{\cG'_{\rm red}}(\cG'^0)=e_k$. Consequently, ${\rm Cent}_{\cG'}(\cG'^0) = e_k$, and our assertion follows from \cite[(7.1.2)]{Fa2}.  

(5) $\Rightarrow$ (1) Let $\tilde{\cG} \subseteq \SL(2)$ be a binary polyhedral group scheme. We consider the semidirect product
\[ \cG := \SL(2)_1\rtimes \tilde{\cG}.\]
Since $\tilde{\cG}$ is linearly reductive, Corollary \ref{SP4} guarantees that $k\cG \cong k\SL(2)_1\sharp k\tilde{\cG}$ is domestic. It follows that the factor algebra $k\SL(2)_1\tilde{\cG}$
of $k\cG$ is domestic. Consequently, $\cB_0(\PP(\SL(2)_1\tilde{\cG})) \cong \cB_0(\SL(2)_1\tilde{\cG})$ enjoys the same property. Let $\cx_\cG(k)$ be the {\it complexity} of the trivial $\cG$-module, that is, the polynomial rate of growth of a minimal projective resolution of $k$. If $\cB_0(\SL(2)_1\tilde{\cG})$ is  representation-finite, then $1 \ge \cx_{\SL(2)_1\tilde{\cG}}(k) \ge \cx_{\SL(2)_1}(k)=2$, a contradiction. As a result, the principal block $\cB_0(\cG) \cong \cB_0(\cG/\cG_{\rm lr}) \cong \cB_0(\PP(\SL(2)_1\tilde{\cG}))$ (cf.\ \cite[(1.1)]{Fa2}) is of polynomial growth. \end{proof}

\bigskip

\begin{Remark} The binary polyhedral group schemes are completely understood, see \cite[(3.3)]{Fa2}. They are determined  
by their McKay quivers, which in turn give rise to the $\Ext$-quivers of the principal blocks $\cB_0(\PP(\SL(2)_1\tilde{\cG})$. \end{Remark}

\bigskip

\bigskip

\end{document}